\newcounter{numberofremark}
\newcommand\nothing[1]{}
\newcommand{\dcl}{\DeclareMathOperator}
\dcl\cdet{cdet} \dcl\Sp{Specm} \dcl\depth{depth} \dcl\im{Im} \dcl\Span{span} \dcl\Ker{Ker} \dcl\Specm{Specm}
\dcl\Supp{Supp} \dcl\codim{codim} \dcl\Y{Y} \dcl\gl{\mathfrak{gl}}    \dcl\U{U} \dcl\T{T}
\dcl\qdet{qdet} \dcl\sgn{sgn} \dcl\gr{gr} \dcl\diag{diag}
\dcl\g{\mathfrak{g}} \dcl\C{\mathbb C} \dcl\dd{{\mathrm d}}
\newcommand\sm{{\mathsf m}}
\newcommand\Ga{{\Gamma}}
\newlength\yStones
\newlength\xStones
\newlength\xxStones
\def\Stones{\pst@object{Stones}}
\def\Stones@i#1{%
  \pst@killglue%
  \begingroup%
  \use@par%
  \setlength\xxStones{\xStones}%
  \expandafter\Stones@ii#1,,\@nil
  \endgroup
  \global\addtolength\xStones{0.6cm}%
  \global\addtolength\yStones{-7.5mm}}%
\def\Stones@ii#1,#2,#3\@nil{%
  \rput(\xxStones,\yStones){%
    \psframebox[framesep=0]{%
      \parbox[c][6mm][c]{11mm}{\makebox[11mm]{$#1$}}}}%
  \addtolength\xxStones{1.2cm}%
  \ifx\relax#2\relax\else\Stones@ii#2,#3\@nil\fi}
\def\Stone#1{\fbox{\makebox[10mm]{\strut#1}}\kern2pt}
\newtheorem{theorem}{Theorem}[section]
\newtheorem{lemma}[theorem]{Lemma}
\newtheorem{corollary}[theorem]{Corollary}
\newtheorem{proposition}[theorem]{Proposition}
\newtheorem{example}[theorem]{Example}
\newtheorem{remark}[theorem]{Remark}
\newtheorem{definition}[theorem]{Definition}
\begin{document}
\title{Explicit construction of  irreducible modules for $U_q(\mathfrak{gl}_n)$}
\author{Vyacheslav Futorny}
\address{Instituto de Matem\'atica e Estat\'istica, Universidade de S\~ao
Paulo,  S\~ao Paulo SP, Brasil} \email{futorny@ime.usp.br}
\author{Luis Enrique Ramirez}
\address{Centro de Matem\'atica, Computa\c c\~ao e Cogni\c c\~ao, Universidade Federal do ABC (AFABC), Santo Andr\'e SP, Brasil} \email{luis.enrique@ufabc.edu.br}
\author{Jian Zhang}
\address{Instituto de Matem\'atica e Estat\'istica, Universidade de S\~ao
Paulo,  S\~ao Paulo SP, Brasil} \email{zhang@ime.usp.br}

\begin{abstract}
We construct new families of $U_q(\mathfrak{gl}_n)$-modules by continuation from finite dimensional representations.
Each such module is
associated with a combinatorial object -  admissible set of relations defined in \cite{FRZ}. More precisely, we prove that any admissible set of relations leads to a family of irreducible  $U_q(\mathfrak{gl}_n)$-modules.  Finite dimensional and generic modules are  particular cases of this construction.

\end{abstract}

\subjclass[2010]{Primary 17B67}
\keywords{Quantum group, Gelfand-Tsetlin module,  Gelfand-Tsetlin basis, tableaux realization}

\maketitle
\section{Introduction}

Gelfand and Graev \cite{GG} proposed a method of constructing  of $\gl_n$-modules which extend finite dimensional modules and admit a basis of tableaux with the standard action of the generators of the Lie algebra \cite{GT}.    This construction is based on a choice of certain relations satisfied by the entries of the Gelfand-Tsetlin tableaux.      Lemire and Patera \cite{LP} conjectured sufficient conditions under which the Gelfand-Graev defines in fact a module, proving it for $n=3$ and $n=4$. In \cite{FRZ} the authors proved this conjecture and extended
the construction for a larger class of irreducible $\gl_n$-modules.  The purpose of this letter is to show how to deform this construction and obtain new large  families of irreducible modules for the quantum group  $U_q(\mathfrak{gl}_n)$.  Infinite dimensional generic and finite dimensional modules are particular cases of this construction.
 New irreducible modules are presented explicitly with a basis consisting of certain tableaux and with explicit action of the generators of
$U_q(\mathfrak{gl}_n)$ generalizing  the construction of finite dimensional representations \cite{UST2}. Having such an explicit construction will be useful for possible applications.

Constructed modules belong to the category of Gelfand-Tsetlin modules with a diagonalizable action of the Gelfand-Tsetlin subalgebra.
For $\gl_n$ the theory of Gelfand-Tsetlin modules has origin in the classical paper of Gelfand and Tsetlin \cite{GT}.   It is related to many concepts arizing in Mathematics and Physics, see for example \cite{KW1}, \cite{KW2},  \cite{GS}, \cite{FM}, \cite{Gr1}, \cite{Gr2}, \cite{CE1}, \cite{CE2}, \cite{FO1}.
 The general theory of Gelfand-Tsetlin modules for $\gl_n$
was developed in \cite{DFO}, \cite{O}, \cite{FO2}, \cite{Maz1}, \cite{Maz2}, \cite{m:gtsb}, \cite{FGR1}, \cite{FGR2}, \cite{FGR3} and references therein.
For $U_q(\mathfrak{gl}_n)$ certain families of  Gelfand-Tsetlin modules were constructed in \cite{MT} and \cite{FRZ1}, while the general theory  was developed in \cite{FHR}.

Current letter provides new information about  Gelfand-Tsetlin modules for\\ $U_q(\mathfrak{gl}_n)$.
The paper is organized as follows. Section 2 contains some preliminary information. In Section 3 we introduce our main technical tools - admissible sets of relations and realizable sets of relations. To any realizable set of relations
  we associate a family of $U_q(\mathfrak{gl}_n)$-modules. We prove the main result of this letter stating that any admissible set of relations is a realizable set of relations (Theorem 3.9).
  A certain effective method (RR-method) of constructing the admissible relation is described in Theorem 3.11.
  Finally, in Section 4 we study the action of the generators of the Gelfand-Tsetlin subalgebra on modules associated with admissible sets of relations. The Gelfand-Tsetlin subalgebra $\Gamma_q$ is diagonalizable on all constructed modules (Theorem 4.1), moreover, it separates the basis tableaux  (Proposition 4.4). Using the action of $\Gamma_q$ we obtain a criterion of irreducibility
  of constructed admissible modules: irreducible modules correspond to maximal sets of admissible relations (Theorem 4.5).

We now fix some notation and conventions. Throughout the paper we fix an integer $n\geq 2$ and $q\in\mathbb{C}$ which is not root of unity. The ground field will be ${\mathbb C}$.   By $U_q$ we denote the quantum enveloping algebra of $\gl_n$. We fix the standard Cartan subalgebra  $\mathfrak h$, the standard triangular decomposition and the corresponding basis of simple roots $\alpha_1, \ldots, \alpha_{n-1}$.  The weights of $U_q$ will be written as $n$-tuples $(\lambda_1, \ldots, \lambda_n)$. For a commutative ring $R$, by ${\rm Specm}\, R$ we denote the set of maximal ideals of $R$. For $i>0$ by $S_i$ we denote the $i$th symmetric group. Let $1(q)$ be the set of all complex $x$ such that $q^{x}=1$. Finally, for any complex number $x$, we set
\begin{align*}
(x)_q=\frac{q^x-1}{q-1},\quad
[x]_q=\frac{q^x-q^{-x}}{q-q^{-1}}.
\end{align*}

\

\noindent{\bf Acknowledgements.}  V.F. is
supported in part by  CNPq  (301320/2013-6) and by
Fapesp  (2014/09310-5).
J. Z. is supported by  Fapesp  (2015/05927-0).

\section{Preliminaries}

We define $U_{q}$ as a unital associative algebra generated by $e_{i}, f_{i}(1\leq i
\leq n)$ and $q^{h}(h\in \mathfrak{h})$ with the
following relations:
\begin{align}
q^{0}=1,\  q^{h}q^{h'}=q^{h+h'} \quad (h,h' \in \mathfrak{h}),\\ \label{re1}
q^{h}e_{i}q^{-h}=q^{\langle h,\alpha_i\rangle}e_{i}  ,\\
q^{h}f_{i}q^{-h}=q^{-\langle h,\alpha_i\rangle}f_{i} ,\\
e_{i}f_{j}-f_{j}e_{i}=\delta_{ij}\frac{q^{\alpha_i}-q^{-\alpha_i}}{q-q^{-1}} ,\\
e_{i}^2e_{j}-(q+q^{-1})e_ie_je_i+e_je_{i}^2=0  \quad (|i-j|=1),\\
f_{i}^2f_{j}-(q+q^{-1})f_if_jf_i+f_jf_{i}^2=0  \quad (|i-j|=1),\\
e_{i}e_j=e_je_i,\  f_if_j=f_jf_i  \quad (|i-j|>1). \label{re2}
\end{align}
The quantum special linear algebra $U_q(sl_n)$ is the subalgebra of $U_q$
generated by $e_i,\ f_i,\ q^{\pm \alpha_i}(i=1,2,\ldots, n-1)$.

We denote by $(U_m)_q$ the quantum universal enveloping algebra of $\gl_m$. We have the following chain $(U_1)_q\subset$ $(U_2)_q\subset$ $\ldots$ $\subset
(U_n)_q$. Let $Z_{m}$ denotes the center of $(U_{m})_{q}$. The subalgebra of $U_q$ generated by $\{
Z_m\,|\,m=1,\ldots, n \}$ will be called the \emph{Gelfand-Tsetlin
subalgebra} of $U_q$ and will be denoted by  ${\Ga}_q$ \cite{FHR}, \cite{FRZ1}.

%

\begin{definition}
\label{definition-of-GZ-modules} A finitely generated $U_q$-module
$M$ is called a \emph{Gelfand-Tsetlin module (with respect to
$\Ga_q$)} if

\begin{equation}\label{equation-Gelfand-Tsetlin-module-def}
M=\bigoplus_{\sm\in\Sp\Ga_q}M(\sm),
\end{equation}

where $M(\sm)=\{v\in M| \sm^{k}v=0 \text{ for some }k\geq 0\}$.
\end{definition}

 For a vector $L=(l_{ij})$ in $\mathbb{C}^{\frac{n(n+1)}{2}}$, by $T(L)$ we will denote the following array with entries $\{l_{ij}:1\leq j\leq i\leq n\}$
\begin{center}

\Stone{\mbox{ \scriptsize {$l_{n1}$}}}\Stone{\mbox{ \scriptsize {$l_{n2}$}}}\hspace{1cm} $\cdots$ \hspace{1cm} \Stone{\mbox{ \scriptsize {$l_{n,n-1}$}}}\Stone{\mbox{ \scriptsize {$l_{nn}$}}}\\[0.2pt]
\Stone{\mbox{ \scriptsize {$l_{n-1,1}$}}}\hspace{1.5cm} $\cdots$ \hspace{1.5cm} \Stone{\mbox{ \tiny {$l_{n-1,n-1}$}}}\\[0.3cm]
\hspace{0.2cm}$\cdots$ \hspace{0.8cm} $\cdots$ \hspace{0.8cm} $\cdots$\\[0.3cm]
\Stone{\mbox{ \scriptsize {$l_{21}$}}}\Stone{\mbox{ \scriptsize {$l_{22}$}}}\\[0.2pt]
\Stone{\mbox{ \scriptsize {$l_{11}$}}}\\
\medskip
\end{center}
such an array will be called a \emph{Gelfand-Tsetlin tableau} of height $n$. For any $1\leq j\leq i\leq n-1$, the vector $\delta^{ij} \in {\mathbb Z}^{\frac{n(n+1)}{2}}$ is defined by  $(\delta^{ij})_{ij}=1$ and all other $(\delta^{ij})_{k\ell}$ are zero. Finally, a Gelfand-Tsetlin tableau of height $n$ is called \emph{standard} if
$l_{ki}-l_{k-1,i}\in\mathbb{Z}_{\geq 0}$ and $l_{k-1,i}-l_{k,i+1}\in\mathbb{Z}_{>0}$ for all $1\leq i\leq k\leq n$.

Recall the quantum version of the classical result of  Gelfand and  Tsetlin which provides an explicit basis  in the finite dimensional case.

\begin{theorem}[\cite{UST2}, Theorem 2.11 and \cite{FRZ1}, Proposition 4.3]\label{Theorem: quantum GT theorem}
Let $L(\lambda)$ be the finite dimensional irreducible module over $U_q$ of highest weight $\lambda=(\lambda_{1},\ldots, \lambda_{n})$, where $\lambda_i-\lambda_{i+1}\in \mathbb{Z}_{\geq 0}$. Then there exist a basis of $L(\lambda)$ consisting of all standard tableaux $T(L)$ with fixed top row $l_{nj}=\lambda_j-j$. Moreover,  the action of the generators of $U_q$ on $L(\lambda)$ is given by the  \emph{Gelfand-Tsetlin formulae}:
\begin{equation}\label{Gelfand-Tsetlin formulas}
\begin{split}
q^{\epsilon_{k}}(T(L))&=q^{a_k}T(L),\quad a_k=\sum_{i=1}^{k}l_{k,i}-\sum_{i=1}^{k-1}l_{k-1,i}+k,\ k=1,\ldots,n,\\
e_{k}(T(L))&=-\sum_{j=1}^{k}
\frac{\prod_{i} [l_{k+ 1,i}-l_{k,j}]_q}{\prod_{i\neq j} [l_{k,i}-l_{k,j}]_q}
T(L+\delta^{kj}),\\
f_{k}(T(L))&=\sum_{j=1}^{k}\frac{\prod_{i} [l_{k-1,i}-l_{k,j}]_q}{\prod_{i\neq j} [l_{k,i}-l_{k,j}]_q}T(L-\delta^{kj}).\\
\end{split}
\end{equation}
Moreover, the generators $c_{mk}$ of $\Gamma_q$ acts on $T(L)$ as multiplication by
\begin{equation}\label{eigenvalues of gamma_mk}
\gamma_{mk}(L)=(k)_{q^{-2}}!(m-k)_{q^{-2}}!q^{k(k+1)+\frac{m(m-3)}{2}}\sum_{\tau}
q^{\sum_{i=1}^{k}l_{m\tau(i)}-\sum_{i=k+1}^{m}l_{m\tau(i)}}
\end{equation}
where $\tau\in S_m$ is such that $\tau(1)<\cdots<\tau(k),\ \tau(k+1)<\cdots<\tau(m)$.
\end{theorem}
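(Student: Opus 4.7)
The plan is to proceed in three stages: construct an abstract module from the tableau formulas, identify it with $L(\lambda)$, and then diagonalize the Gelfand-Tsetlin subalgebra.

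First, let $V(\lambda)$ be the $\mathbb{C}$-vector space with basis the set of all standard Gelfand-Tsetlin tableaux $T(L)$ of height $n$ with fixed top row $l_{nj}=\lambda_j-j$, and define linear operators $q^{\epsilon_k}, e_k, f_k$ on $V(\lambda)$ by the stated formulae. Well-definedness requires two checks: the denominators $[l_{k,i}-l_{k,j}]_q$ are nonzero for $i\neq j$, which holds because the entries of any row of a standard tableau are strictly decreasing integers and $q$ is not a root of unity; and whenever a coefficient in $e_kT(L)$ or $f_kT(L)$ is nonzero, the shifted tableau $T(L\pm\delta^{kj})$ remains standard, because the standardness conditions that could fail are precisely those forcing a numerator factor to vanish.

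Second, verify the defining relations of $U_q$. The Cartan-type relations follow immediately from the explicit weight formula for $a_k$. The identity $e_if_j-f_je_i=\delta_{ij}(q^{\alpha_i}-q^{-\alpha_i})/(q-q^{-1})$ and the quantum Serre relations require direct $q$-computations: for $i=j$, the off-diagonal terms cancel by a $q$-analogue of a partial-fraction identity and the surviving diagonal contribution reduces to the right-hand side; the Serre relations for $|i-j|=1$ follow by a similar but longer manipulation. These verifications are carried out in \cite{UST2}. Once the relations hold, the tableau $T(L^{\circ})$ with $l_{ki}=\lambda_i-i$ for all $i\leq k$ is a highest weight vector of weight $\lambda$ (every summand of $e_kT(L^{\circ})$ contains a vanishing numerator factor). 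Cyclicity under the $f_k$'s and a dimension count against the Weyl dimension formula then identify $V(\lambda)$ with $L(\lambda)$.

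Finally, to obtain the eigenvalues of $c_{mk}$, use that $c_{mk}\in Z_m\subset (U_m)_q$. Since the operators $e_j,f_j$ for $j<m$ and $q^{\epsilon_j}$ for $j\leq m$ affect only rows of index at most $m$, the subspace $V_L^{(m)}$ spanned by tableaux whose rows $m+1,\ldots,n$ agree with those of $L$ is $(U_m)_q$-stable; its further subspace with a fixed $m$-th row is an irreducible $(U_m)_q$-submodule by induction on $n$ applied to the first two stages. Hence $c_{mk}$ acts as a scalar on this submodule, and evaluating on its highest weight vector (the tableau with $l_{ji}=l_{mi}$ for all $j\leq m$) via the quantum Harish-Chandra image of $c_{mk}$ produces the symmetric sum $\gamma_{mk}(L)$ appearing in the theorem. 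The principal technical obstacle is the verification of the $ef-fe$ commutation and the quantum Serre relations on the tableau basis; the remainder of the argument is a clean adaptation of the classical Gelfand-Tsetlin construction.
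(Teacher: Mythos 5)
The paper offers no proof of this statement---it is recalled verbatim from \cite{UST2} and \cite{FRZ1}---so your argument has to stand on its own, and its first step contains a genuine error. You assert that whenever a coefficient in $e_kT(L)$ or $f_kT(L)$ is nonzero, the shifted tableau $T(L\pm\delta^{kj})$ is again standard, ``because the standardness conditions that could fail are precisely those forcing a numerator factor to vanish.'' This is false. Increasing $l_{kj}$ can break two interlacing conditions: $l_{k+1,j}-l_{kj}\in\mathbb{Z}_{\geq 0}$, which is indeed detected by the numerator factor $[l_{k+1,j}-l_{kj}]_q$, but also $l_{k-1,j-1}-l_{kj}\in\mathbb{Z}_{>0}$, which involves row $k-1$ and is invisible to the numerator $\prod_i[l_{k+1,i}-l_{kj}]_q$. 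Concretely, for $n=3$ and $\lambda=(2,1,0)$ take $l_{31}=1$, $l_{32}=-1$, $l_{33}=-3$, $l_{21}=1$, $l_{22}=-2$, $l_{11}=-1$: this tableau is standard, $T(L+\delta^{22})$ is not (it violates $l_{11}-l_{22}>0$), yet the corresponding coefficient in $e_2T(L)$ is $-[3]_q[1]_q[-1]_q/[3]_q=1\neq 0$. The dual failure occurs for $f_k$ against row $k+1$.

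Consequently the operators are only well defined after one imposes the convention that any non-standard array is the zero vector, and the verification of the $U_q$-relations must then account for these silently discarded terms; that truncation is exactly where the difficulty of the theorem lies, and it is not a routine partial-fraction manipulation. (It is also the technical heart of the present paper: Lemma~\ref{action of center1} and the limit arguments in the proof of Theorem~\ref{sufficiency of admissible} exist precisely to control such vanishing for general admissible sets $\mathcal{C}$, of which the standard set $\mathcal{S}$ is a special case.) The rest of your outline---nonvanishing of denominators, the highest weight tableau, cyclicity plus the count of standard patterns, and the computation of the $c_{mk}$-eigenvalues through the chain $(U_m)_q\subset U_q$ and the quantum Harish-Chandra image---is the standard and correct route, consistent with the cited references.
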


\section{Admissible relations}


Set $\mathfrak{V}:=\{(i,j)\ |\ 1\leq j\leq i\leq n\}$. We will consider relations between elements of $\mathfrak{V}$ of the form  $(i,j)\geq (s,t)$ or $(i,j)>(s,t)$. More precisely, we will consider subsets of the following set of relations:

\begin{align}
\mathcal{R} &:=\mathcal{R}^{\geq} \cup \mathcal{R}^{>}\cup \mathcal{R}^{0},
\end{align}
where
\begin{align}
\mathcal{R^{\geq}} &:=\{(i,j)\geq(i-1,j')\ |\ 1\leq j\leq i\leq n,\ 1\leq j'\leq i-1\},\\
\mathcal{R^{>}} &:=\{(i-1,j')>(i,j)\ |\ 1\leq j\leq i\leq n,\ 1\leq j'\leq i-1\},\\
\mathcal{R}^{0} &:=\{(n,i)\geq(n,j)\ |\ 1\leq i\neq j\leq n\}.
\end{align}


Let $\mathcal{C}$ be a subset of $\mathcal{R}$.
Denote by $\mathfrak{V}(\mathcal{C})$ the set of all $(i,j)$ in $\mathfrak{V}$ such that $(i,j)\geq (\text{respectively }>,\ \leq,\ <)  \ (r, s)\in\mathcal{C}$ for some $(r, s)$.
Let $\mathcal{C}_1$ and $\mathcal{C}_2$ be two subsets of $\mathcal{C}$.
We say that $\mathcal{C}_1$ and $\mathcal{C}_2$ are {\it disconnected}
if
$\mathfrak{V}(\mathcal{C}_1)\cap\mathfrak{V}(\mathcal{C}_2)=\emptyset$,
 otherwise  $\mathcal{C}_1$ and $\mathcal{C}_2$ are connected.
 $\mathcal{C}$ is called {\it decomposable} if it can be decomposed into the  union of two disconnected subsets of $\mathcal{R}$, otherwise $\mathcal{C}$ is called indecomposable.

Note that any subset of $\mathcal{R}$ is  a union of disconnected  indecomposable sets and such decomposition is unique.

\begin{definition}
Let $\mathcal{C}$ be any subset of $\mathcal{R}$. Given $(i,j),\ (r,s)\in \mathfrak{V}(\mathcal{C})$ we will write:
\begin{itemize}
\item[(i)] $(i,j)\succeq_{\mathcal{C}} (r,s)$ if, there exists $\{(i_{1},j_{1}),\ldots,(i_{m},j_{m})\}\subseteq \mathfrak{V}(\mathcal{C})$ such that
\begin{align}\label{geq sub C}
\{(i,j)\geq (i_{1},j_1),\  (i_{1},j_1)\geq (i_2,j_2), \cdots,\  (i_{m},j_m)\geq (r,s)\}&\subseteq \mathcal{C}
\end{align}
\item[(ii)] We write $(i,j)\succ_{\mathcal{C}} (r,s)$ if there exists $\{(i_{1},j_{1}),\ldots,(i_{m},j_{m})\}\subseteq \mathfrak{V}(\mathcal{C})$ such that in the condition \ref{geq sub C}, at least one of the inequalities is $>$.
\end{itemize}
Given another set of relations $\mathcal{C}'$, we say that {\it\bf  $\mathcal{C}$   implies $\mathcal{C}'$} if whenever we have $(i,j)\succ_{\mathcal{C}'} (r,s)$ (respectively $(i,j)\succeq_{\mathcal{C}'} (r,s)$) we also have $(i,j)\succ_{\mathcal{C}} (r,s)$ (respectively $(i,j)\succeq_{\mathcal{C}} (r,s)$).
\end{definition}

\begin{definition}
Let $\mathcal{C}$ be an indecomposable subset of $\mathcal{R}$. A subset of $\mathcal{C}$ of the form $\{(k,i)\geq (k-1,t),\ (k-1,s)>(k,j)\}$ with $i<j$ and $s<t$ will be called a \emph{\bf cross}.
\end{definition}

We will define now our main concept which is a slight modification of the definition of an admissible set in \cite{FRZ}.

\begin{definition}
Let $\mathcal{C}$ be an indecomposable set. We say that $\mathcal{C} $ is {\it \bf admissible}  if it satisfies the following conditions:
\begin{itemize}
\item[(i)] For any $1\leq k-1\leq n$, we have $(k, i)\succ_{\mathcal{C}}(k, j)$ only if $i<j$;
\item[(ii)]  $(n,i)\succeq_{\mathcal{C}} (n,j)$ only if $i<j$;
\item[(iii)] There is not cross in $\mathcal{C} $;
\item[(iv)] For every $(k,i)\ (k,j)\in\mathfrak{V}(\mathcal{C})$ with  $1\leq k\leq n-1$ there exists $s< t$ such that one of the following holds
\begin{equation}\label{condition for admissible}
\begin{split}
 &\{(k,i)>(k+1,s)\geq (k,j),\ (k,i)\geq (k-1,t)>(k,j)\}\subseteq \mathcal{C},\\
 &\{(k,i)>(k+1,s),(k+1,t)\geq (k,j)\}\subseteq  \mathcal{C}.
 \end{split}
\end{equation}
\end{itemize}
An arbitrary set $\mathcal{C}$ is admissible if every  indecomposable subset of $\mathcal{C}$ is admissible.
\end{definition}

Denote by $\mathfrak{F}$ the set of all indecomposable admissible subsets. Note that we obtain the same admissible sets as in  \cite{FRZ}.
In the following we will use the admissible sets to construct modules for $U_q$.


\subsection{Tableaux realization of admissible sets of relations}
In this section we will describe $\mathbb{C}$-vector spaces associated with sets of relations $\mathcal{C}$ with Gelfand-Tsetlin tableaux as a bases. We will prove that we have a structure of  a $U_{q}$-module on such space with the action of the generators of $U_q$ given by the Gelfand-Tsetlin formulas (\ref{Gelfand-Tsetlin formulas}).

%

 \begin{definition} Let $\mathcal{C}$ be any subset of $\mathcal{R}$ and $T(L)$ any Gelfand-Tsetlin tableau. Recall that $1(q):=\{x\in \mathbb{C}\ |\ q^x=1\}$.
 \begin{itemize}
\item[1.]
\begin{itemize}
\item[(i)]
We say that $T(L)$ satisfies a relation $(i,j)\geq (r,s)$ (respectively, $(i,j)> (r,s)$) if $l_{ij}-l_{st}\in \mathbb{Z}_{\geq 0}+\frac{1(q)}{2}\ (\text{respectively, }l_{ij}-l_{st}\in \mathbb{Z}_{> 0}+\frac{1(q)}{2})$.
\item[(ii)]
We say that a Gelfand-Tsetlin tableau $T(L)$ \emph{satisfies $\mathcal{C}$} if $T(L)$ satisfies all the relations in $\mathcal{C}$
and $l_{ki}-l_{kj}\in \mathbb{Z}+\frac{1(q)}{2}$ only if $(k,i)$ and $(k,j)$
in the same indecomposable subset of $\mathfrak{V}(\mathcal{C})$. In this case we call $T(L)$ a $\mathcal{C}$-\emph{\bf realization}.
\item[(iii)] $\mathcal{C}$ is a \emph{\bf maximal} set of relations for $T(L)$ if $T(L)$ satisfies $\mathcal{C}$ and whenever $T(L)$ satisfies a set of relations $\mathcal{C}'$ we have that $\mathcal{C}$ implies $\mathcal{C}'$.
\end{itemize}
\item[2.] If $T(L)$  satisfies  $\mathcal{C}$ we denote by ${\mathcal B}_{\mathcal{C}}(T(L))$  the set of all tableaux of the form $T(L+z)$ ($z\in \mathbb Z^{\frac{n(n-1)}{2}}$) that satisfy $\mathcal{C}$, and by $V_{\mathcal{C}}(T(L))$ the complex vector space spanned by ${\mathcal B}_{\mathcal{C}}(T(L))$.

\end{itemize}
\end{definition}

\begin{example} Set
\begin{align}\label{def of S}
\mathcal{S} &:=\{(i+1,j)\geq(i,j)>(i+1,j+1)\ |\ 1\leq j\leq i\leq n-1\}.
\end{align}
\noindent
 It follows from the definition that $\emptyset$ and $\mathcal{S}$ are admissible sets of relations. Moreover, the set of all tableaux satisfying $\mathcal{S}$ coincides with the set of all standard tableaux and the set of all tableaux satisfying $\emptyset$ coincide with the set of all generic tableaux.
\end{example}

Our goal is to show that any admissible set of relations $\mathcal{C}$ leads to a family of $U_{q}$-modules. In fact, for any $\mathcal{C}$-realization $T(L)$ we will prove that $V_{\mathcal{C}}(T(L))$ is a Gelfand-Tsetlin module with the action of the generators of $U_{q}$ given by the Gelfand-Tsetlin formulas (\ref{Gelfand-Tsetlin formulas}). For this we will need some technical lemmas.


Set
\begin{equation}
e_{ki}(L)=\left\{
\begin{array}{cc}
0,& \text{ if } T(L)\notin  \mathcal {B}_{\mathcal{C}}(T(L))\\
-\frac{\prod_{j=1}^{k+1}[l_{ki}-l_{k+1,j}]_q}{\prod_{j\neq i}^{k}[l_{ki}-l_{kj}]_q},& \text{ if } T(L)\in  \mathcal {B}_{\mathcal{C}}(T(L))
\end{array}
\right.
\end{equation}

\begin{equation}
f_{ki}(L)=\left\{
\begin{array}{cc}
0,& \text{ if } T(L)\notin  \mathcal {B}_{\mathcal{C}}(T(L))\\
\frac{\prod_{j=1}^{k-1}[l_{ki}-l_{k-1,j}]_q}{\prod_{j\neq i}^{k}[l_{ki}-l_{kj}]_q},& \text{ if } T(L)\in  \mathcal {B}_{\mathcal{C}}(T(L))
\end{array}
\right.
\end{equation}

\begin{equation}
h_{k}(L)=\left\{
\begin{array}{cc}
0,& \text{ if } T(L)\notin  \mathcal {B}_{\mathcal{C}}(T(L))\\
q^{2\sum_{i=1}^{k}l_{ki}-\sum_{i=1}^{k-1}l_{k-1,i}-\sum_{i=1}^{k+1}l_{k+1,i} -1},& \text{ if } T(L)\in\mathcal {B}_{\mathcal{C}}(T(L))
\end{array}
\right.
\end{equation}

\begin{equation}
\Phi(L,z_1,\ldots,z_m)=
\left\{
\begin{array}{cc}
1,& \text{ if }T(L+z_1+\ldots+z_t)\in  \mathcal {B}_{\mathcal{C}}(T(L)) \text{ for any } t\\
0,& \text{ otherwise}.
\end{array}
\right.
\end{equation}

We will denote by $T(v)$ the tableau with variable entries $v_{ij}$.
\begin{lemma}\label{action of center1}
Let $\mathcal{C}\in\mathfrak{F}$, $T(L)$ any tableau satisfying $\mathcal{C}$.
\begin{itemize}
\item[(i)] If $T(L+\delta^{kj})\notin \mathcal {B}_{\mathcal{C}}(T(L))$ and
$l_{k,i}-l_{kj}\notin 1+\frac{1(q)}{2}$ for any $i$, then  $$\lim\limits_{v\rightarrow l}e_{kj}(v)f_{kj}(v+\delta^{kj})=0.$$
\item[(ii)] If $T(L-\delta^{kj})\notin \mathcal {B}_{\mathcal{C}}(T(L))$ and  $l_{k,j}-l_{k,i}\notin 1+\frac{1(q)}{2}$ for any i,
 then $$\lim\limits_{v\rightarrow l}f_{kj}(v)e_{kj}(v-\delta^{kj})=0.$$
\item[(iii)] If $l_{k,i}-l_{k,j}\notin 1+\frac{1(q)}{2}$,
 then $T(L+\delta^{k,j}), T(L-\delta^{k,i})\notin \mathcal {B}_{\mathcal{C}}(T(L))$, and
$$\lim\limits_{v\rightarrow l}e_{kj}(v)f_{kj}(v+\delta^{k,j})-f_{ki}(v)e_{ki}(v-\delta^{k,i})=0.$$
\end{itemize}
\end{lemma}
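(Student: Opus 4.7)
My plan is to treat $e_{kj}(v)$ and $f_{kj}(v)$ as rational functions of the complex entries $v_{rs}$ via the Gelfand-Tsetlin formulas, and to trace which numerator/denominator factors vanish at $v=L$.

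For part (i), the hypothesis $T(L+\delta^{kj})\notin\mathcal B_{\mathcal C}(T(L))$ means the shift $l_{kj}\mapsto l_{kj}+1$ violates at least one relation in $\mathcal C$ involving $(k,j)$. Since $(k,j)$ participates only in relations of $\mathcal R^{\geq}\cup\mathcal R^{>}$ linking row $k$ with rows $k\pm 1$, and since increasing $l_{kj}$ preserves $(k,j)\geq(k-1,s)$ and $(k,j)>(k+1,s)$, the only relations that can break are $(k+1,s)\geq(k,j)$ or $(k-1,s)>(k,j)$. The first forces $l_{k+1,s}-l_{kj}\in\tfrac{1(q)}{2}$, hence $[l_{kj}-l_{k+1,s}]_q=0$, a factor in the numerator of $e_{kj}(L)$; the second forces $l_{k-1,s}-l_{kj}-1\in\tfrac{1(q)}{2}$, hence $[l_{kj}+1-l_{k-1,s}]_q=0$, a factor in the numerator of $f_{kj}(L+\delta^{kj})$. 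The hypothesis $l_{ki}-l_{kj}\notin 1+\tfrac{1(q)}{2}$ excludes $[v_{kj}+1-v_{ki}]_q=0$, so the denominator of $f_{kj}(v+\delta^{kj})$ is regular at $v=L$; any remaining zero of $[v_{kj}-v_{ki}]_q$ in the denominator of $e_{kj}(v)$ is cancelled by an extra numerator zero forced by admissibility (condition (iv) applied to $(k,i),(k,j)\in\mathfrak V(\mathcal C)$). Part (ii) is the symmetric statement obtained by exchanging the roles of $e$ and $f$ and considering the shift $l_{kj}\mapsto l_{kj}-1$; now only $(k,j)\geq(k-1,s)$ and $(k,j)>(k+1,s)$ are breakable, contributing numerator zeros to $f_{kj}(L)$ or $e_{kj}(L-\delta^{kj})$ respectively.

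For part (iii) the hypothesis is weaker and permits $l_{ki}-l_{kj}\in\tfrac{1(q)}{2}$, in which case both products have a genuine simple pole at $v=L$ coming from $[v_{kj}-v_{ki}]_q$ or $[v_{ki}-v_{kj}]_q$. I first deduce $T(L+\delta^{kj}),\,T(L-\delta^{ki})\notin\mathcal B_{\mathcal C}$ using admissibility: the connecting relations supplied by condition (iv) for $(k,i),(k,j)\in\mathfrak V(\mathcal C)$ become tight exactly when $l_{ki}=l_{kj}\pmod{\tfrac{1(q)}{2}}$, and either shift then breaks one of them. Next, parametrizing $v_{ki}=l_{ki}+\epsilon$, I expand both $e_{kj}(v)f_{kj}(v+\delta^{kj})$ and $f_{ki}(v)e_{ki}(v-\delta^{ki})$ as Laurent series in $\epsilon$; the identity $[-x]_q=-[x]_q$ together with the symmetric form of the numerator factors forces the residues at $\epsilon=0$ to coincide, so the difference has a finite limit, and its constant term is shown to vanish using the numerator zeros coming from non-realizability, exactly as in (i) and (ii). The main obstacle is precisely this residue-cancellation argument in (iii), which demands careful tracking of multiple coincident zeros and poles and precise manipulation of $q$-number identities; parts (i) and (ii) are essentially bookkeeping once the correct broken relation has been identified.
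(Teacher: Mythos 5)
Parts (i) and (ii) of your argument are correct and coincide with the paper's proof: non-membership of $T(L\pm\delta^{kj})$ in $\mathcal B_{\mathcal C}(T(L))$ forces a broken relation of the form $(k+1,s)\geq(k,j)$ or $(k-1,t)>(k,j)$ (resp.\ $(k,j)\geq(k-1,t)$ or $(k,j)>(k+1,s)$), and the resulting vanishing numerator factor gives limit $0$ once the hypothesis keeps the row-$k$ denominators regular.

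Part (iii) contains a genuine gap. The singular configuration you analyse, $l_{ki}-l_{kj}\in\frac{1(q)}{2}$ with a pole coming from $[v_{kj}-v_{ki}]_q$, cannot occur: for a $\mathcal C$-realization with $k\leq n-1$, either $(k,i)$ and $(k,j)$ lie in different indecomposable components of $\mathfrak V(\mathcal C)$, in which case $l_{ki}-l_{kj}\notin\mathbb Z+\frac{1(q)}{2}$ by the definition of satisfying $\mathcal C$, or they lie in the same one, in which case admissibility condition (iv) produces a chain with a strict inequality between them and hence $l_{ki}-l_{kj}\in\mathbb Z_{\neq 0}+\frac{1(q)}{2}$. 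The case that (iii) actually addresses --- the complement of the hypotheses of (i) and (ii), and the one treated in the paper's proof --- is $l_{ki}-l_{kj}\in 1+\frac{1(q)}{2}$ (the ``$\notin$'' in the statement is a typo for ``$\in$''), where the simple poles come from the \emph{shifted} denominator factors $[v_{kj}+1-v_{ki}]_q$ of $f_{kj}(v+\delta^{kj})$ and $[v_{ki}-1-v_{kj}]_q$ of $e_{ki}(v-\delta^{ki})$, not from $[v_{kj}-v_{ki}]_q$; your Laurent expansion is centered on a nonexistent singularity.

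More importantly, your closing step (``the constant term is shown to vanish using the numerator zeros\ldots exactly as in (i) and (ii)'') omits the key quantitative input. Each of the two products carries exactly one simple pole at $v=L$, so a single vanishing numerator factor does not suffice; one needs at least two. This is precisely what the paper extracts from admissibility: condition (iv) applied to $(k,i),(k,j)$, combined with $l_{ki}-l_{kj}=1$ modulo $\frac{1(q)}{2}$, pins down $l_{k+1,s}-l_{kj}\in\frac{1(q)}{2}$ and $l_{k-1,t}-l_{ki}\in\frac{1(q)}{2}$ (or two such coincidences in row $k+1$), which is the count $\#\{\cdots\}\geq 2$ appearing in the paper's proof. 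Without establishing this count, your residue-cancellation scheme shows at best that the difference has a finite limit, not that the limit is zero.
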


\begin{proof}Since $T(L+\delta^{kj})\notin \mathcal {B}_{\mathcal{C}}(T(L))$, we  have $\{(k+1,s)\geq (k,j)\}\subseteq\mathcal{C}$ or $\{(k-1,t)> (k,j)\}\subseteq \mathcal{C}$.
Suppose $\{(k+1,s)\geq (k,j)\}\subseteq\mathcal{C}$ and $T(L+\delta^{kj})\notin \mathcal {B}_{\mathcal{C}}(T(L))$. Then $l_{k+1,s}-l_{k,j}\notin\frac{1(q)}{2}$ and by direct computation one has  $\lim\limits_{v\rightarrow l}e_{kj}(v)f_{kj}(v+\delta^{kj})=0$.
Suppose $T(L+\delta^{kj})$ does not satisfies the relation $l_{k-1,t}-l_{k,j}\in \mathbb{Z}_{>0}+\frac{1(q)}{2}$. Then we have
$l_{k-1,t}-l_{k,j}\in 1+\frac{1(q)}{2}$ and $$\lim\limits_{v\rightarrow l}e_{kj}(v)f_{kj}(v+\delta^{kj})=0.$$

The proof of (ii) is similar to (i).\\
\noindent
It is clear that $T(L-\delta^{k,j}),\  T(L+\delta^{k,j+1})\notin \mathcal {B}_\mathcal{C}(T(L))$ if $l_{k,j}-l_{k,j+1}\in 1+\frac{1(q)}{2}$.
It is easy to see that $\#\{l_{k+1,i'},l_{k-1,j'}\ |\ l_{k+1,i'}-l_{kj}\in\frac{1(q)}{2},\ l_{k-1,j'}-l_{ki}\in\frac{1(q)}{2}\}\geq 2$.
By direct computation one has
$$\lim_{v\rightarrow l}e_{kj}(v)f_{kj}(v+\delta^{k,j+1})-f_{kj}(v)e_{kj}(v-\delta^{k,j})=0.$$
\end{proof}
\begin{lemma}[\cite{FRZ}, Lemma 4.21] \label{lemma phi1}
 Let $\mathcal{C}\in\mathfrak{F}$, $z^{(1)},z^{(2)}\in \mathbb{Z}^{\frac{n(n-1)}{2}}$. Denote $I_1=\{(i,j)\ |\ z_{ij}^{(1)}\neq 0\}$, $I_2=\{(i,j)\ |\ z_{ij}^{(2)}\neq 0\}$. If $I_1\cap I_2=\emptyset$ and for any $(i_1,j_1)\in I_1$,
    $(i_2,j_2)\in I_2$ there is no relation between $(i_1,j_1)$ and $(i_2,j_2)$, then
     $T(R+z^{(1)}+z^{(2)})\neq 0$ if and only if $T(R+z^{(1)})\neq 0$ and $T(R+z^{(2)})\neq 0$.
\end{lemma}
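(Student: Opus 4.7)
The plan is to reduce the claim to a combinatorial case analysis on the inequalities in $\mathcal{C}$, after which the equivalence becomes transparent. First I would unpack the meaning of ``$T(R+z)\neq 0$'': by the definition of $\mathcal{B}_{\mathcal{C}}(T(R))$, this is equivalent to $T(R+z)$ satisfying every relation in $\mathcal{C}$ together with the congruence condition that $(R+z)_{ki}-(R+z)_{kj}\in\mathbb{Z}+\tfrac{1(q)}{2}$ only when $(k,i),(k,j)$ lie in the same indecomposable component of $\mathfrak{V}(\mathcal{C})$. Since $z^{(1)},z^{(2)}\in\mathbb{Z}^{n(n-1)/2}$ are integer vectors, they preserve every residue class modulo $\mathbb{Z}+\tfrac{1(q)}{2}$; thus the congruence condition is automatic for each of the three shifted tableaux as soon as it holds for $T(R)$, which we may assume since $T(R)\in\mathcal{B}_{\mathcal{C}}(T(R))$. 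The claim therefore reduces to showing that each inequality in $\mathcal{C}$ holds at $T(R+z^{(1)}+z^{(2)})$ if and only if it holds at both $T(R+z^{(1)})$ and $T(R+z^{(2)})$.

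Next I would exploit the hypothesis that no relation in $\mathcal{C}$ links a vertex of $I_1$ to a vertex of $I_2$: this forces every relation $(i,j)\bowtie(r,s)\in\mathcal{C}$ (with $\bowtie\in\{\geq,>\}$) to fall into at least one of two classes -- class A, where both endpoints lie outside $I_2$ (so $z^{(2)}$ vanishes on both), or class B, where both endpoints lie outside $I_1$ (so $z^{(1)}$ vanishes on both). For a class A relation one has
\[
(R{+}z^{(1)}{+}z^{(2)})_{ij}-(R{+}z^{(1)}{+}z^{(2)})_{rs}=(R{+}z^{(1)})_{ij}-(R{+}z^{(1)})_{rs}\quad\text{and}\quad (R{+}z^{(2)})_{ij}-(R{+}z^{(2)})_{rs}=R_{ij}-R_{rs},
\]
with symmetric identities for class B; relations with both endpoints outside $I_1\cup I_2$ lie in both classes.

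The two implications then follow mechanically from these identities. For $(\Leftarrow)$, assuming both $T(R+z^{(1)})$ and $T(R+z^{(2)})$ satisfy $\mathcal{C}$: every class A relation holds at $T(R+z^{(1)}+z^{(2)})$ because it has the same value there as at $T(R+z^{(1)})$, and every class B relation matches $T(R+z^{(2)})$. For $(\Rightarrow)$, assuming $T(R+z^{(1)}+z^{(2)})$ satisfies $\mathcal{C}$: every class A relation holds at $T(R+z^{(1)})$ by the same matching, while any relation only in class B has its value at $T(R+z^{(1)})$ equal to its value at $T(R)$, which satisfies $\mathcal{C}$ by assumption; symmetric reasoning handles $T(R+z^{(2)})$. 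The main obstacle is the structural observation in the second paragraph -- once one sees that the ``no $I_1$-$I_2$ relation'' hypothesis partitions $\mathcal{C}$ exactly along the supports of $z^{(1)}$ and $z^{(2)}$, everything else reduces to routine substitution; a minor but essential subtlety is that the congruence-class condition is preserved automatically by integer shifts and so does not enter the case analysis.
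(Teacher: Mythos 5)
Your argument is correct. The paper itself does not prove this lemma (it is quoted from \cite{FRZ}, Lemma 4.21), and your proof is the natural direct verification: splitting the relations of $\mathcal{C}$ into those supported away from $I_2$ and those supported away from $I_1$ (which the ``no relation between $I_1$ and $I_2$'' hypothesis makes exhaustive), matching the relevant entry differences across the four tableaux, and noting that integer shifts preserve the cosets of $\mathbb{Z}+\frac{1(q)}{2}$ so the congruence condition is automatic. The one implicit hypothesis you invoke --- that $T(R)$ itself satisfies $\mathcal{C}$, which is needed for the forward implication --- is indeed how the lemma is applied in the proof of Theorem \ref{sufficiency of admissible}, so the argument is complete.
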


\

\subsection{$U_{q}$-modules defined by admissible relations}

\begin{definition}
Let $\mathcal{C}$ be a subset of $\mathcal{R}$. We call $\mathcal{C}$  \emph{\bf realizable} if for any tableau $T(L)$ satisfying  $\mathcal{C}$,  the vector space $V_{\mathcal{C}}(T(L))$  has a structure of a $U_q$-module, endowed with the action of $U_q$ given by the Gelfand-Tsetlin formulas (\ref{Gelfand-Tsetlin formulas}).
\end{definition}

\begin{theorem}\label{sufficiency of admissible} If $\mathcal{C}$ is a union of disconnected sets from $\mathfrak{F}$ then $\mathcal{C}$ is realizable.
\end{theorem}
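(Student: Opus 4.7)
The plan is to show that the Gelfand-Tsetlin formulas \eqref{Gelfand-Tsetlin formulas} define an action of $U_q$ on $V_\mathcal{C}(T(L))$, using the convention that any tableau $T(L+z)$ falling outside $\mathcal{B}_\mathcal{C}(T(L))$ is identified with zero in $V_\mathcal{C}(T(L))$. Since an admissible $\mathcal{C}$ decomposes uniquely into disconnected indecomposable blocks from $\mathfrak{F}$, and the action of each generator involves only row $k$ together with its neighbours, I would reduce to the case $\mathcal{C}\in\mathfrak{F}$. The starting point is that the same formulas define a $U_q$-module structure on the generic space $V_\emptyset(T(L))$, so the defining relations of $U_q$ hold as rational-function identities in the tableau entries $l_{ij}$; the task is to specialize these identities to the admissible locus.

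First I would verify that the coefficients $e_{kj}(L)$ and $f_{kj}(L)$ are finite whenever $T(L)\in\mathcal{B}_\mathcal{C}(T(L))$. A denominator $[l_{ki}-l_{kj}]_q$ vanishes only if $l_{ki}-l_{kj}\in 1(q)/2$. If $(k,i)$ and $(k,j)$ lie in different indecomposable blocks of $\mathfrak{V}(\mathcal{C})$, the definition of a $\mathcal{C}$-realization forbids $l_{ki}-l_{kj}\in\mathbb{Z}+1(q)/2$, so the denominator is nonzero; if they lie in the same block, admissibility condition~(i) together with the chain definition of $\succ_{\mathcal{C}}$ forces $l_{ki}-l_{kj}\in\mathbb{Z}_{>0}+1(q)/2$, and since $q$ is not a root of unity this set is disjoint from $1(q)/2$. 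So every coefficient with an in-basis source is finite, and the action is meaningful term by term.

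Next I would verify the defining relations of $U_q$. The Cartan-type relations and the $q^h$-commutation relations are diagonal and immediate. For the commutator $e_i f_j-f_j e_i=\delta_{ij}(q^{\alpha_i}-q^{-\alpha_i})/(q-q^{-1})$, the case $i\neq j$ is straightforward, while for $i=j$ each matrix entry arises as a sum of paired contributions of the form $e_{kj}(L)f_{kj}(L+\delta^{kj})-f_{ki}(L)e_{ki}(L-\delta^{ki})$; the terms in which the intermediate tableau leaves $\mathcal{B}_\mathcal{C}(T(L))$ are exactly the ones controlled by Lemma~\ref{action of center1}, which exhibits their limiting contribution as zero, so the generic identity passes to the admissible locus. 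The genuine obstacle is the quantum Serre relations: one must expand each composition $e_i^2 e_j-(q+q^{-1})e_i e_j e_i+e_j e_i^2$ (and its $f$-analogue) as a sum over pairs of shifts $\delta^{i\bullet},\delta^{j\bullet}$, identify which produce tableaux in $\mathcal{B}_\mathcal{C}(T(L))$, and check that the cancellations valid in the generic identity survive this restriction. Here Lemma~\ref{lemma phi1} is the workhorse: it decouples basis membership of $T(L+z^{(1)}+z^{(2)})$ into membership of $T(L+z^{(1)})$ and $T(L+z^{(2)})$ whenever the supports of $z^{(1)}$ and $z^{(2)}$ are unrelated in $\mathfrak{V}(\mathcal{C})$, allowing the Serre expansion to be grouped in the same pattern as generically. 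Combined with Lemma~\ref{action of center1}, which handles the genuinely degenerate coefficients, this reduces the Serre identity on $V_\mathcal{C}(T(L))$ to the already known generic identity, completing the proof.
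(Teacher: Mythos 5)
Your overall strategy is the same as the paper's: treat the Gelfand--Tsetlin formulas as specializations of the generic rational-function identities on $V_{\emptyset}$, and pass to the admissible locus by a limit argument, with Lemma~\ref{lemma phi1} controlling the $\Phi$-factors in the Serre relations and Lemma~\ref{action of center1} controlling the degenerate terms of $[e_i,f_i]$. However, there is a genuine gap in your treatment of the Serre relations. Lemma~\ref{lemma phi1} applies only when there is \emph{no} relation in $\mathcal{C}$ between the boxes being shifted; the hard case is precisely when $(j,r)$ is related to $(i,s)$ or $(i,t)$ (e.g.\ $(i,s)\geq(j,r)\in\mathcal{C}$), where the order of the shifts affects intermediate basis membership and the coefficient is not simply the limit of the generic one. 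Lemma~\ref{action of center1} does not cover this: it concerns products $e_{kj}(v)f_{kj}(v+\delta^{kj})$ on a single row and is relevant only to the commutator relation, so your claim that the two lemmas together ``reduce the Serre identity to the generic identity'' does not go through. The paper fills this hole with an extra idea: for each such pair it introduces an auxiliary tableau $T(v')$ in which only the related entries are specialized and all others remain generic, observes that $V_{\mathcal{C}'}(T(v'))$ with $\mathcal{C}'=\{(i,s)\geq(j,r)\}$ is already a module for generic free variables, and checks that the relevant $\Phi$-values agree with those of this intermediate family; the limit is then taken from that partially specialized module rather than from $V_{\emptyset}$.

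A related symptom of the gap is that your argument never invokes admissibility conditions (iii) and (iv), yet the theorem fails for arbitrary indecomposable $\mathcal{C}$, so these conditions must enter essentially. In the paper they do: in the case where $(j,r)$ is related to both $(i,s)$ and $(i,t)$, the sandwich condition (iv) is used to exclude the configuration $r_{is}-r_{it}=1$ (which would force a vanishing tableau), and condition (iv) is also what guarantees that the denominators $[l_{ki}-l_{kj}]_q$ are nonzero for \emph{all} same-row pairs in one indecomposable block, not only those joined by a $\succ_{\mathcal{C}}$-chain as your finiteness argument assumes. You would need to add both the intermediate-specialization step and the combinatorial case analysis that uses (iii)--(iv) to make the proof complete.
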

\begin{proof} Let $\mathcal{C}$ be a  union of disconnected sets from $\mathfrak{F}$. It is sufficient to consider the case when $\mathcal{C}$ is a union of two disconnected subsets from $\mathfrak{F}$.
Suppose $\mathcal{C}=\mathcal{C}_1\cup\mathcal{C}_2$.

Let $T(L)$ be any $\mathcal{C}$-realization.
In order to prove that $V_{\mathcal{C}}(T(L))$ is a $U_{q}$-module one needs to verify all the defining relations (\ref{re1}--\ref{re2}) for any $T(R)\in {\mathcal B}_{\mathcal{C}}(T(L))$.

First we show that $(e_{i}^2e_{j}-(q+q^{-1})e_ie_je_i+e_je_{i}^2) T(R)=0 \, (|i-j|=1)$.
\begin{equation}\label{formula}
\begin{aligned}
&(e_{i}^2e_{j}-(q+q^{-1})e_ie_je_i+e_je_{i}^2)T(R)\\
=&\sum_{r,s,t}\Phi(R,\delta^{jr},\delta^{is})
e_{jr}(R)e_{is}(R+\delta^{jr})e_{it}(R+\delta^{jr}+\delta^{is})T(R+\delta^{jr}+\delta^{is}+\delta^{it})\\
+&\sum_{r,s,t}\Phi(R,\delta^{is},\delta^{it})
e_{is}(R)e_{it}(R+\delta^{is})e_{jr}(R+\delta^{is}+\delta^{it})T(R+\delta^{jr}+\delta^{is}+\delta^{it})\\
-&[q]_2\sum_{r,s,t}\Phi(R,\delta^{is},\delta^{jr})
e_{is}(R)e_{jr}(R+\delta^{is})e_{it}(R+\delta^{is}+\delta^{jr})T(R+\delta^{jr}+\delta^{is}+\delta^{it}).\\
\end{aligned}
\end{equation}

Now we consider the coefficients by nonzero tableau $T(R+\delta^{jr}+\delta^{is}+\delta^{it})$.

\item [(i)] Let $s=t$.

\begin{itemize}
\item[(a)] Suppose there is no relation between $(i,s)$ and $(j,r)$. Then $\Phi(R,\delta^{jr},\delta^{is})=\Phi(R,\delta^{is},\delta^{is})=\Phi(R,\delta^{is},\delta^{jr})=1$ by Lemma \ref{lemma phi1}.
Then the coefficient of $T(R+\delta^{jr}+2\delta^{is})$ is the limit of the coefficient of $T(v+\delta^{jr}+2\delta^{is})$ when $v\rightarrow R$ (here $T(v)$ again is a tableau with variable entries). Thus
the coefficient of $T(R+\delta^{jr}+2\delta^{is})$ is zero.
\item[(b)] Suppose there exists a relation between $(i,s)$ and $(j,r)$. Without loss of generality we assume that this relation is  $\mathcal {C'}=\{(i,s)\geq (j,r)\}$,
      Let $T(v')$ be the tableau with $v'_{s't'}=l_{s't'}$ if $(s',t')=(i,s) \text{ or } (j,r)$, and
      variable entries otherwise. Then $T(v')$ is a $\mathcal {C'}$-realization and $V_{\mathcal{C}'}(T(v'))$ is a module for arbitrary generic values of free variables in $v'$.
        Let $z^{(1)},z^{(2)}\in \{\delta^{jr},\delta^{is}\}$. Then $\Phi(R,z^{(1)},z^{(2)})=\Phi(v,z^{(1)},z^{(2)})$ where $z^{(1)}=z^{(2)}$ only if  $z^{(1)}=z^{(2)}=\delta^{is}$. Therefore the coefficient of $T(R+\delta^{jr}+2\delta^{is})$ is the limit of the coefficient of $T(v+\delta^{jr}+2\delta^{is})$ when $v\rightarrow R$, hence, it is zero.
\end{itemize}
\item [(ii)] Suppose $s\neq t$. Then there is no relation between $(i,s)$ and $(i,t)$.
\begin{itemize}
\item[(a)] Suppose there is no relation between $(j,r)$ and $(i,s)$ or between $(j,r)$ and $(i,t)$. Then
 the value of the function $\Phi$ that  appears along with $T(R+\delta^{jr}+\delta^{is} +\delta^{it})$ is $1$ by Lemma \ref{lemma phi1}.
 Thus the coefficient of $T(R+\delta^{jr}+\delta^{is} +\delta^{it})$ is zero similarly to (a) in (i).

\item[(b)] Suppose there is a relation between $(j,r)$ and one of $\{(i,s),(i,t)\}$. Similarly to (b) in (i), one has that the coefficient of $T(R+\delta^{jr}+\delta^{is} +\delta^{it})$ is zero.

\item[(c)] Suppose there exist relations between $(j,r)$ and both $\{(i,s), (i,t)\}$.
In this case $(j,r),(i,s),(i,t)$ are in the same indecomposable set.
If $r_{is}-r_{it}=1$ then $r_{jr}=r_{it}$ and there exists $r'$  such that
$\{(i,s)\geq (i-1,r')> (i,t))\} \subseteq \mathcal{C}$ and
$r_{i-1, r'}=r_{is}$. It contradicts with $T(R+\delta^{jr}+\delta^{is}+\delta^{it})$ nonzero.
Therefore $r_{is}-r_{it}\in \frac{1(q)}{2}+\mathbb{Z}_{>0}$.
 Then $r_{is}-r_{jr}\in \frac{1(q)}{2}+\mathbb{Z}_{>0}$ or $r_{jr}-r_{it}\in \frac{1(q)}{2}+\mathbb{Z}_{>0}$.
      Without loss of generality we assume that $r_{jr}-r_{it}\in \frac{1(q)}{2}+\mathbb{Z}_{>0}$.
      Let $\mathcal {C'}=\{(i,s)\geq (j,r)\}$ and $T(v')$  the tableau with $v'_{s't'}=l_{s't'}$ if $(s',t')=(i,s) \text{ or } (j,r)$ and variable entries
       otherwise. Then $T(v')$ is a $\mathcal {C'}$-realization and $V_{\mathcal{C}'}(T(v'))$ is a module.
      Let $z^{(1)},z^{(2)}\in \{\delta^{jr},\delta^{is},\delta^{it}\}$. One has that $\Phi(R,z^{(1)},z^{(2)})=\Phi(v,z^{(1)},z^{(2)})$ whenever $z^{(1)}\neq z^{(2)}$. Therefore the coefficient of $T(R+\delta^{jr}+2\delta^{is})$ is the limit of the coefficient of $T(v+\delta^{jr}+2\delta^{is})$ when $v\rightarrow R$, which  is zero.
\end{itemize}



In the following we show that $(e_{i}f_{j}-f_{j}e_{i})T(R)=\delta_{ij}\frac{q^{\alpha_i}-q^{-\alpha_i}}{q-q^{-1}}T(R)$. We have
\begin{equation}
\begin{split}
    (e_{i}f_{j}-f_{j}e_{i})T(R)
    =&\sum_{r=1}^{j}\sum_{s=1}^{i}\Phi(R,-\delta^{jr})f_{jr}(R)e_{is}(R+\delta^{jr})T(R-\delta^{jr}+\delta^{is})\\
-&\sum_{r=1}^{j}\sum_{s=1}^{i}\Phi(R,\delta^{is})e_{is}(R)f_{jr}(R+\delta^{is})T(R-\delta^{jr}+\delta^{is}).\\
\end{split}
\end{equation}

Now we consider the coefficients of nonzero tableaux $T(L-\delta^{jr}+\delta^{is})$.
If  $(i,r)\neq (j,s)$ then  the coefficient of  $T(L-\delta^{jr}+\delta^{is})$ is zero similarly to the above case and, hence,  $[e_i,f_j]T(R)=0$ if $i\neq j$.

Suppose $i=j=k$.  The coefficient of $T(R-\delta^{ir}+\delta^{is})$ is zero if $r\neq s$.

By Corollary \ref{action of center1},
the coefficient of $T(R)$ is
\begin{align*}
&\lim_{v\rightarrow l}\left(\sum_{r=1}^{k}\sum_{s=1}^{k}f_{kr}(v)e_{ks}(v+\delta^{kt})
-\sum_{r=1}^{k}\sum_{s=1}^{k} e_{ks}(v)f_{kr}(v+\delta^{ks})\right)\\
&=\lim_{v\rightarrow R}h_{k}(v)
=h_{k}(R).
\end{align*}
Hence $(e_{i}f_{j}-f_{j}e_{i})T(R)=\delta_{ij}\frac{q^{\alpha_i}-q^{-\alpha_i}}{q-q^{-1}}T(R)$.

All other relations can be verified similarly. Thus $\mathcal{C}$ is realizable.

\end{proof}

\begin{remark}
The realizable sets of relations  for $\gl_n$ were all obtained in \cite{FRZ} (Theorem 4.22). Here we only deform the definition of a tableau satisfying a set of relations, i.e. replace $\mathbb{Z}$ by $\mathbb{Z}+\frac{1(q)}{2}$. As in the non quantum case, if we only consider irreducible modules, all realizable sets of relations are admissible. Thus the converse of Theorem \ref{sufficiency of admissible} holds.
\end{remark}

An effective method of constructing of realizable sets of relations was introduced  in \cite{FRZ}, called   {\it relations removal method} ({\it RR-method} for short). We will show that the same method can be applied to construct admissible sets of relations (and hence realizable subsets of $\mathcal{R}$ by Theorem \ref{condition for admissible},)  in the quantum case.

Let $\mathcal{C}$ be any realizable subset of $\mathcal{R}$ and  $T(L)$  a tableau satisfying $\mathcal{C}$. Fix $(k,i)\in \mathfrak{V}(\mathcal{C})$ and suppose that $T(L+m\delta^{ki})$ is a $\mathcal{C}$-realization for infinitely many choices of $m\in\mathbb{Z}$. Denote by $\widetilde{\mathcal{C}_{ki}}$ the set of relations obtained from $\mathcal{C}$ by removing all relations that involve $(k,i)$. We say that $\widetilde{\mathcal{C}}\subsetneq\mathcal{C}$ is obtained from $\mathcal{C}$ by the RR-method if it is obtained by a sequence of such  removing of relations for different indexes.

\begin{theorem}\label{RR}
Let $ \mathcal{C}_1$ be any realizable subset of $\mathcal{R}$. If $\mathcal{C}_2$ is obtained from $ \mathcal{C}_1$ by the RR-method  then $\mathcal{C}_2$ is realizable.
\end{theorem}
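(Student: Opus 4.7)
By induction on the number of removal steps, it suffices to handle a single RR step: assume $\mathcal{C}_2=\widetilde{(\mathcal{C}_1)_{ki}}$ is obtained from the realizable set $\mathcal{C}_1$ by deleting every relation involving a fixed coordinate $(k,i)$, under the hypothesis that $T(L+m\delta^{ki})$ is a $\mathcal{C}_1$-realization for infinitely many $m\in\mathbb{Z}$. Let $T(L)$ be any tableau satisfying $\mathcal{C}_2$. Since no relation in $\mathcal{C}_2$ constrains the entry $l_{ki}$, every shift $T(L+m\delta^{ki})$ still satisfies $\mathcal{C}_2$, and by the RR precondition there is an infinite set $M\subseteq\mathbb{Z}$ with $T(L+m\delta^{ki})$ a $\mathcal{C}_1$-realization for all $m\in M$. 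For each such $m$, realizability of $\mathcal{C}_1$ gives $V_{\mathcal{C}_1}(T(L+m\delta^{ki}))$ the structure of a $U_q$-module via the Gelfand--Tsetlin formulas (\ref{Gelfand-Tsetlin formulas}).

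The plan is to transfer every defining relation (\ref{re1})--(\ref{re2}) from $V_{\mathcal{C}_1}(T(L+m\delta^{ki}))$ down to $V_{\mathcal{C}_2}(T(L))$ by Laurent-polynomial continuation in $q^m$. The coefficients $e_{kj}$, $f_{kj}$, $h_k$ are rational expressions in the $q$-numbers $[l_{ab}-l_{cd}]_q$; the substitution $l_{ki}\mapsto l_{ki}+m$ turns them into Laurent polynomials in $q^m$ once the $l_{ki}$-free denominators are cleared. Applying any defining relation to a basis element $T(R)\in\mathcal{B}_{\mathcal{C}_2}(T(L))$ produces a finite sum $\sum_\alpha\Phi_\alpha(R)\,C_\alpha(R)\,T(R+z_\alpha)$, where $\Phi_\alpha(R)\in\{0,1\}$ records whether the intermediate shifts remain in the basis. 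Since $\widetilde{(\mathcal{C}_1)_{ki}}$ contains no relation linking $(k,i)$ to any other coordinate, Lemma \ref{lemma phi1} decouples the $(k,i)$-direction from the rest; and for $m\in M$ of sufficiently large $|m|$ every relation in $\mathcal{C}_1\setminus\mathcal{C}_2$ (all of which involve $(k,i)$) is satisfied with a margin exceeding the finitely many shifts $z_\alpha$, so that $\Phi_{\mathcal{C}_1}(R+m\delta^{ki},z_\alpha)=\Phi_{\mathcal{C}_2}(R,z_\alpha)$. The $\mathcal{C}_1$-identity at parameter $m$ therefore specializes to a Laurent polynomial identity in $q^m$ that vanishes on the infinite set $M$; it must vanish identically, and setting $m=0$ yields the desired relation in $V_{\mathcal{C}_2}(T(L))$.

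The main obstacle is the $\Phi$-bookkeeping in the step above: one must verify the stabilization $\Phi_{\mathcal{C}_1}(R+m\delta^{ki},z_\alpha)=\Phi_{\mathcal{C}_2}(R,z_\alpha)$ uniformly over the (finite) family of shifts arising in each of (\ref{re1})--(\ref{re2}), in particular for the Serre relations with two simultaneous shifts at adjacent levels. This is handled exactly as in the proof of Theorem \ref{sufficiency of admissible} together with Lemma \ref{lemma phi1}, since once the relations involving $(k,i)$ are removed any pair of coordinates appearing in the remaining constraints is either unrelated in $\mathcal{C}_2$ (where Lemma \ref{lemma phi1} applies directly) or lies in a common indecomposable component not containing $(k,i)$ (where the $\mathcal{C}_1$-computation already carries over verbatim after the shift $l_{ki}\mapsto l_{ki}+m$).
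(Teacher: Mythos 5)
Your overall strategy---transfer the defining relations from $\mathcal{C}_1$-modules at shifted values of the $(k,i)$-entry, observe that after clearing denominators each identity is a Laurent polynomial in $q^{m}$, use that it vanishes for infinitely many $m$ (legitimate since $q$ is not a root of unity), and stabilize the $\Phi$-indicators for large shifts---is the right one, and it is essentially the idea behind the proof of Theorem 4.24 in \cite{FRZ} to which the paper defers. But there is a genuine gap at the very first step. You take an \emph{arbitrary} tableau $T(L)$ satisfying $\mathcal{C}_2$ and assert that ``by the RR precondition'' infinitely many $T(L+m\delta^{ki})$ are $\mathcal{C}_1$-realizations. The RR precondition is a statement about one particular witness tableau that satisfies $\mathcal{C}_1$; it says nothing about your $T(L)$, which only satisfies $\mathcal{C}_2$. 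For a typical $\mathcal{C}_2$-realization \emph{no} integer shift of the $(k,i)$-entry produces a $\mathcal{C}_1$-realization: if $\mathcal{C}_1\setminus\mathcal{C}_2$ contains, say, $(k,i)\geq(k-1,j')$ and $l_{ki}-l_{k-1,j'}\notin\mathbb{Z}+\frac{1(q)}{2}$, then $l_{ki}+m-l_{k-1,j'}\notin\mathbb{Z}_{\geq 0}+\frac{1(q)}{2}$ for every $m\in\mathbb{Z}$. Worse, if deleting the relations at $(k,i)$ disconnects an indecomposable component of $\mathcal{C}_1$ into several components of $\mathcal{C}_2$, then by part 1(ii) of the definition of ``satisfies $\mathcal{C}$'' a $\mathcal{C}_2$-realization is \emph{required} to have non-integral differences between entries of the new components, so no modification of the single entry $l_{ki}$ can ever restore all of $\mathcal{C}_1$. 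Hence the family of modules $V_{\mathcal{C}_1}(T(L+m\delta^{ki}))$ from which you propose to continue simply does not exist for the tableaux you must handle, and ``setting $m=0$'' has nothing to specialize from.

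The repair is not cosmetic: one must replace the entry $l_{ki}$ (and, in the disconnecting case, perturb the other newly freed entries) by generic or parametric values compatible with $\mathcal{C}_1$, prove the identities there as identities of rational functions in those parameters---this is where the RR precondition is actually used, namely to guarantee that the set of parameter values admissible for $\mathcal{C}_1$ is infinite, hence Zariski-dense for the Laurent-polynomial argument---and then take the limit back to the given tableau with the same case-by-case $\Phi$-bookkeeping as in the proof of Theorem \ref{sufficiency of admissible} and Lemma \ref{lemma phi1}. Your final paragraph gestures at this machinery, but it cannot be invoked until the continuation is set up in the parameters of the removed entry rather than in integer translates of the given tableau. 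Your $\Phi$-stabilization step also needs the shifts $m$ to go to infinity in the direction permitted by the one-sided constraints on $(k,i)$, not merely to have large $|m|$.
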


\begin{proof}
Analogous to the proof of Theorem 4.24 in \cite{FRZ}.
\end{proof}

We immediately obtain the following statement for generic modules which was shown in \cite{FRZ1}, Theorem 5.2 (cf. \cite{Zh}, Theorem 2).

\begin{corollary}
Let $T(L)$ be a generic Gelfand-Tsetlin tableau of height $n$.  Then $V_{\emptyset}(T(L))$ has a structure of a $U_q$-module with the action of the generators of $U_q$ given by the Gelfand-Tsetlin formulas (\ref{Gelfand-Tsetlin formulas}).
\end{corollary}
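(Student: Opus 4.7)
The plan is to deduce this corollary directly from Theorem \ref{sufficiency of admissible} by taking $\mathcal{C} = \emptyset$. First I would verify that $\emptyset$ qualifies as a realizable set in the sense of the theorem. By Definition 3.4, a set is admissible if every indecomposable subset is admissible; since $\emptyset$ has no indecomposable subsets, this condition is vacuously satisfied. In particular, $\emptyset$ is trivially a union of (zero) disconnected sets from $\mathfrak{F}$, so Theorem \ref{sufficiency of admissible} applies and $\emptyset$ is realizable.

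Next I would check that a generic Gelfand-Tsetlin tableau $T(L)$ satisfies $\emptyset$ in the sense of Definition 3.5. There are no relations to verify, so the first condition is vacuous. The secondary condition in Definition 3.5(ii) requires that $l_{ki} - l_{kj} \in \mathbb{Z} + \frac{1(q)}{2}$ only when $(k,i)$ and $(k,j)$ lie in the same indecomposable component of $\mathfrak{V}(\emptyset) = \emptyset$. Unpacking the standard notion of \emph{generic} tableau in this context—entries chosen so that no two of them on a given row (or on adjacent rows, in the sense relevant to the Gelfand-Tsetlin formulas) differ by an element of $\mathbb{Z} + \frac{1(q)}{2}$—this condition is automatically fulfilled, because no such integer differences occur. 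Thus $T(L)$ is an $\emptyset$-realization.

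Having done this, the corollary is immediate: by the definition of realizable, the complex vector space $V_{\emptyset}(T(L))$ spanned by ${\mathcal B}_{\emptyset}(T(L))$ carries a $U_q$-module structure with the generators acting via the Gelfand-Tsetlin formulas (\ref{Gelfand-Tsetlin formulas}). Note that ${\mathcal B}_{\emptyset}(T(L))$ is precisely the set of all tableaux $T(L+z)$ with $z \in \mathbb{Z}^{n(n-1)/2}$, since no relation constrains the shifts—this explains why the resulting module is the familiar infinite-dimensional generic module.

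There is no real obstacle here: the only subtle point is confirming the genericity condition on $T(L)$ matches the requirement of Definition 3.5(ii), but this is built into the standard definition of a generic Gelfand-Tsetlin tableau in the quantum setting (which is slightly stronger than in the classical case precisely because the congruence class is modulo $\mathbb{Z} + \frac{1(q)}{2}$ rather than $\mathbb{Z}$). Once this is unwound, the corollary is just a specialization of Theorem \ref{sufficiency of admissible}, and the statement also recovers Theorem 5.2 of \cite{FRZ1} as expected.
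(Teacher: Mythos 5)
Your argument is correct in substance, but it takes a genuinely different route from the paper. You apply Theorem \ref{sufficiency of admissible} directly to $\mathcal{C}=\emptyset$, reading the empty set as a (vacuous) union of disconnected sets from $\mathfrak{F}$; the paper instead starts from the set $\mathcal{S}$ of relations \eqref{def of S} defining standard tableaux, which is admissible by Example 3.7 and hence realizable by Theorem \ref{sufficiency of admissible}, and then strips away all of the relations in $\mathcal{S}$ one index at a time using the RR-method, concluding that $\emptyset$ is realizable by Theorem \ref{RR}. The two routes are consistent with the paper's framework (Example 3.7 explicitly declares $\emptyset$ admissible, and Definition 3.4's clause for arbitrary sets is vacuously satisfied by $\emptyset$), and your identification of $\mathcal{B}_{\emptyset}(T(L))$ with the full lattice of integral shifts and of condition (ii) of Definition 3.5 with the genericity of $T(L)$ (no two entries of a row differing by an element of $\mathbb{Z}+\tfrac{1(q)}{2}$) is exactly right. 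What your route buys is directness: you avoid the RR-method entirely, and in particular you do not need to check its hypothesis that $T(L+m\delta^{ki})$ remains a realization for infinitely many $m$ at each removal step. What the paper's route buys is that it sidesteps the one degenerate point in your argument: the hypothesis of Theorem \ref{sufficiency of admissible} is literally a union of sets \emph{from} $\mathfrak{F}$, whose members are nonempty indecomposable admissible sets, so covering $\emptyset$ requires admitting the empty union (note that by the paper's own definition $\emptyset=\emptyset\cup\emptyset$ is arguably decomposable, hence not itself in $\mathfrak{F}$). The authors' choice to route through $\mathcal{S}$ and Theorem \ref{RR} avoids having to adjudicate that edge case, and also serves to illustrate the RR-method; if you keep your version, you should say explicitly that you are invoking Theorem \ref{sufficiency of admissible} in the empty-union sense, or equivalently observe that its proof goes through verbatim for $\mathcal{C}=\emptyset$ since every coefficient falls under the ``no relation'' cases handled by Lemma \ref{lemma phi1} and the limit argument.
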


\begin{proof}
By Theorem \ref{condition for admissible} the set $\mathcal{S}$ is realizable and applying the RR-method to $\mathcal{S}$, after finitely many steps we can remove all the relations in $\mathcal{S}$, then  $\emptyset$ is realizable by Theorem \ref{RR}.
\end{proof}

We  call  $V_{\mathcal{C}}(T(L))$ {\it admissible Gelfand-Tsetlin module} associated with the admissible set of relations $\mathcal{C}$. Note that
$V_{\mathcal{C}}(T(L))$ is infinite dimensional if
$\mathcal{C}$  does not imply $\mathcal{S}$.

\section{Action of Gelfand-Tsetlin subalgebra}

From now on we will assume that $\mathcal{C}$ is an admissible  subset of $\mathcal{R}$ and consider the $U_q$-module
$V_{\mathcal{C}}(T(L))$. We will analyze the action of the Gelfand-Tsetlin subalgebra  $\Gamma_q$ on modules  $V_{\mathcal{C}}(T(L))$.

The action of $U_q$ on this irreducible module is given by the Gelfand-Tsetlin formulas (\ref{Gelfand-Tsetlin formulas}). Moreover, the action of $\Gamma_{q}$ is given by (\ref{eigenvalues of gamma_mk}). Then as in the non quantum case, we have the following Theorem:

\begin{theorem}\label{action of gamma}
For any admissible $\mathcal{C}$ the module $V_{\mathcal{C}}(T(L))$ is a Gelfand-Tsetlin module with diagonalisable action of
the generators of the Gelfand-Tsetlin subalgebra  given by the  formula \eqref{eigenvalues of gamma_mk}.
\end{theorem}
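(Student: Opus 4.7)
The plan is to establish the eigenvalue formula \eqref{eigenvalues of gamma_mk} on $V_\mathcal{C}(T(L))$ by reducing to the finite-dimensional case of Theorem \ref{Theorem: quantum GT theorem} through an algebraic-continuation argument, paralleling the strategy already used for the $U_q$-relations in Theorem \ref{sufficiency of admissible}. First I would fix a basis tableau $T(R)\in\mathcal{B}_\mathcal{C}(T(L))$ and expand $c_{mk}\cdot T(R)$ using the Gelfand-Tsetlin formulas \eqref{Gelfand-Tsetlin formulas} applied to the generators of $(U_m)_q$ appearing in a normal-form expression for $c_{mk}$. Raw expansion yields a sum $\sum_{z} a_{mk}(R,z)\,T(R+z)$ indexed by integer vectors $z$ supported in the first $m$ rows, whose coefficients $a_{mk}(R,z)$ are rational functions in the entries of $R$ built from $q$-brackets.

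Next, view the same expansion with $R$ replaced by a tableau $T(v)$ having variable entries. Since the identity $c_{mk}\cdot T(v)=\gamma_{mk}(v)T(v)$ holds in the finite-dimensional case (Theorem \ref{Theorem: quantum GT theorem}) on a Zariski-dense set of integral dominant parameters, the off-diagonal coefficients $a_{mk}(v,z)$ with $z\neq 0$ must vanish identically as rational functions of $v$, and $a_{mk}(v,0)=\gamma_{mk}(v)$. Consequently the identity persists at any specialization where the denominators are regular.

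For admissible $\mathcal{C}$ two issues must be addressed. First, some tableaux $T(R+z)$ appearing in the raw expansion may fail to lie in $\mathcal{B}_\mathcal{C}(T(L))$ and are declared zero in $V_\mathcal{C}(T(L))$. Following the style of Lemma \ref{action of center1} and the case analysis in Theorem \ref{sufficiency of admissible}, I would show that the numerator of any such coefficient contains a vanishing $q$-bracket factor: admissibility forces a blocking relation in row $k\pm 1$ that produces an entry difference lying in $\tfrac{1(q)}{2}$, at which $[\,\cdot\,]_q=0$. Second, regularity of the limit $v\to R$ on each surviving coefficient follows from admissibility as well, since entries belonging to different indecomposable blocks of $\mathfrak{V}(\mathcal{C})$ have non-integral, non-half-integral differences, and condition (iv) of admissibility controls how coincidences within a block arise.

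Combining these points, the forbidden off-diagonal terms vanish via bracket-zeros, the allowed off-diagonal terms vanish via the rational-function identity inherited from Theorem \ref{Theorem: quantum GT theorem}, and the diagonal term survives with coefficient $\gamma_{mk}(R)$. Hence $c_{mk}\cdot T(R)=\gamma_{mk}(R)T(R)$ for every $T(R)\in\mathcal{B}_\mathcal{C}(T(L))$, so $\Gamma_q$ acts diagonally in the tableau basis and $V_\mathcal{C}(T(L))$ decomposes as in \eqref{equation-Gelfand-Tsetlin-module-def}, confirming it is a Gelfand-Tsetlin module in the sense of Definition \ref{definition-of-GZ-modules}. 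The main obstacle I anticipate lies in the third paragraph: checking systematically, for each type of forbidden shift $z$, that admissibility does produce a vanishing $q$-bracket factor in $a_{mk}(R,z)$. This is essentially a quantum translation of the corresponding argument in \cite{FRZ}, relying on the observation that $[x]_q=0$ precisely when $x\in\tfrac{1(q)}{2}$.
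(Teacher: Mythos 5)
Your proposal is correct and follows essentially the same route as the paper, whose own proof simply defers to the continuation argument of Theorem 5.2 in \cite{FRZ}: establish the eigenvalue identity generically by comparison with the finite-dimensional case, then check that coefficients of forbidden tableaux vanish through $q$-bracket zeros forced by admissibility and that the specialization $v\to R$ is regular. Your observation that $[x]_q=0$ exactly when $x\in\tfrac{1(q)}{2}$ is precisely the quantum deformation that makes the classical argument carry over.
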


\begin{proof}
Essentially repeats the proof of Theorem 5.2  in \cite{FRZ}.
\end{proof}

\begin{remark}\label{same GT character adding 1(q)}

Note that for any $x\in 1(q)$ we have $\gamma_{mk}(L)=\gamma_{mk}(L+x\delta^{ij})$ for any $m,\ k$ and any $1\leq j\leq i\leq n$. In particular, the tableaux $T(L)$ and $T(L+x\delta^{ij})$ define the same Gelfand-Tsetlin character.

\end{remark}

\begin{remark}
Let $\mathcal{C}$ be any realizable set of relations and $T(L)$ satisfying  $\mathcal{C}$. Then for any ${x\in 1(q)}$ the tableau $T(L+x\delta^{ij})$ is also a $\mathcal{C}$-realization and the Gelfand-Tsetlin modules  $V_{\mathcal{C}}(T(L+x\delta^{ij}))$ and  $V_{\mathcal{C}}(T(L))$ are isomorphic (see Remark \ref{same GT character adding 1(q)}). On the other hand, when $\frac{x}{2}\notin 1(q)$, the two modules are not isomorphic.
In general, every admissible set of relations defines infinitely many nonisomorphic modules.
\end{remark}

Now we can show that $\Gamma_q$ separates basis tableaux in all constructed modules $V_{\mathcal{C}}(T(L))$ and hence, in their irreducible quotients.

\begin{proposition}\label{thm-mult}
For any $\sm\in \Sp \Gamma_q$ from the Gelfand-Tsetlin  support of $V_{\mathcal{C}}(T(L))$,  the Gelfand-Tsetlin multiplicity of $\sm$ is one.
\end{proposition}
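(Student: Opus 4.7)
My plan is to combine Theorem \ref{action of gamma} with the observation that two distinct basis tableaux in $\mathcal{B}_\mathcal{C}(T(L))$ carry distinct Gelfand-Tsetlin characters. Since $\Gamma_q$ is diagonalizable on $V_\mathcal{C}(T(L))$ with eigenvalues as in \eqref{eigenvalues of gamma_mk}, this will immediately yield that every $\Gamma_q$-eigenspace in the support is one-dimensional, hence the multiplicity is one.

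To show that the character determines the tableau, I first extract a row-by-row invariant. Setting $A_j=q^{r_{mj}}$ and noting that the sum over $\tau$ in \eqref{eigenvalues of gamma_mk} simplifies to $(\prod_j A_j)^{-1}\,e_k(A_1^2,\ldots,A_m^2)$, where $e_k$ denotes the $k$th elementary symmetric polynomial, one rewrites
\[
\gamma_{mk}(R)=K_{mk}\,q^{-\sum_j r_{mj}}\,e_k\!\left(q^{2r_{m1}},\ldots,q^{2r_{mm}}\right),
\]
with $K_{mk}$ depending only on $m,k$. Comparing for $k=0,1,\ldots,m$, the equality $\gamma_{mk}(R)=\gamma_{mk}(R')$ forces $e_k$-equalities for all $k$, hence the multiset identity $\{q^{2r_{mj}}\}_j=\{q^{2r'_{mj}}\}_j$ at each row $m$. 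Since tableaux in $\mathcal{B}_\mathcal{C}(T(L))$ differ from $T(L)$ only by integer shifts of the lower $n(n-1)/2$ entries, the top row $r_{nj}=l_{nj}=r'_{nj}$ is automatic.

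For rows $m\leq n-1$, I partition $\{1,\ldots,m\}$ by the equivalence $j\sim j'\Leftrightarrow l_{mj}-l_{mj'}\in\mathbb{Z}+\frac{1(q)}{2}$. The requirement that $T(L)$ satisfies $\mathcal{C}$ forces each class to be either a singleton or the row-$m$ intersection of some indecomposable component of $\mathfrak{V}(\mathcal{C})$; any bijection realizing the multiset identity must preserve these classes since $q^{2r_{mj}}=q^{2r'_{mj'}}$ implies $l_{mj}-l_{mj'}\in\mathbb{Z}+\frac{1(q)}{2}$. Singleton classes need no argument. On a multi-element class, admissibility conditions (i), (iii), (iv) provide a strict $\succ_\mathcal{C}$-chain ordering the row-$m$ positions by index, so $r_{mj_1}-r_{mj_2}\in\mathbb{Z}_{>0}+\frac{1(q)}{2}$ for $j_1<j_2$ in the class; since $q$ is not a root of unity, the values $q^{2r_{mj}}$ are then pairwise distinct on the class for both tableaux, and the matching is forced to be the identity. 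Therefore $r_{mj}-r'_{mj}\in\frac{1(q)}{2}\cap\mathbb{Z}=\{0\}$ and hence $T(R)=T(R')$.

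The main technical point is establishing the strict linear order inside a component at row $m\leq n-1$. Case (a) of condition (iv) yields it immediately via the chain $r_{mi}>r_{m+1,s}\geq r_{mj}$, but case (b) requires comparing two row-$(m+1)$ entries $r_{m+1,s}$ and $r_{m+1,t}$ with $s<t$. I expect to handle this by iterating condition (iv) at level $m+1$, invoking condition (iii) (no crosses) to exclude pathological configurations and using the fixed top row together with condition (ii) as the base of the induction when $m+1=n$. This is the direct quantum analogue of the argument given for $\gl_n$ in \cite{FRZ}.
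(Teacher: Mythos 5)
Your proof follows essentially the same route as the paper: both reduce the Gelfand--Tsetlin character to symmetric functions of the row entries (equivalently, the multiset $\{q^{2r_{mj}}\}_j$ per row), and then use the fact that two tableaux of $\mathcal{B}_{\mathcal{C}}(T(L))$ differ by integral shifts together with the strict ordering of row entries forced by admissibility to rule out any nontrivial permutation. The paper's proof is considerably terser (it simply asserts the ordering contradiction, deferring to \cite{FRZ}), whereas you spell out the elementary-symmetric-polynomial reduction and the within-class ordering induction; this is a correct filling-in of the same argument, not a different one.
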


\begin{proof}
The action of $\Gamma_q$ is given by the formulas (\ref{eigenvalues of gamma_mk}), and hence determined by the values
 of  symmetric polynomials on the entries of the rows of the tableaux. Given two Gelfand-Tsetlin tableaux $T(R_1)$ and $T(R_2)$ in $\mathcal{B}_{\mathcal{C}}(T(L))$, we have $c_{rs}(T(R_1))=c_{rs}(T(R_2))$ for any $1\leq s\leq r\leq n$ if and only if $R_1=\sigma(R_2)$ for some $\sigma\in G$.
But if $\sigma\neq 1$ then $T(R_1)$ and $T(R_2)$ have different order among the entries in $r$th row. Since both  $T(R_1)$ and $T(R_2)$ satisfy $\mathcal{C}$ and
$T(R_1)=T(R_2+z)$ for some $z\in \mathbb{Z}^{\frac{n(n-1)}{2}}$
  we come to  a contradiction.
\end{proof}

Therefore, we have an explicit basis of $V_{\mathcal{C}}(T(L))$, and of its irreducible quotients, parametrized by different Gelfand-Tsetlin tableaux with an explicit basis of the generators of $U_q$
and of $\Gamma_q$.

It was proved In \cite{FRZ1} that the irreducible module  containing generic tableau $T(R)$ has a basis of tableaux
$$\mathcal{I}(T(R))=\{T(S)\in\mathcal{B}(T(R)): \Omega^{+}(T(S))=\Omega^{+}(T(R))\},$$
where $$\Omega^{+}(T(S)):=\{(i,j,k)\ |\ s_{i,j}-s_{i-1,k}\in \frac{1(q)}{2}+\mathbb{Z}_{\geq 0}\}.$$

The following statement is a generalization of this result.
 Recall that $\mathcal{C}$ is a maximal set of relations satisfied by $T(L)$ if $T(L)$ is a $\mathcal{C}$-realization and  for any admissible set  of relations $\mathcal{C}'$ satisfied by $T(L)$, $\mathcal{C}$ implies $\mathcal{C}'$.

\begin{theorem}\label{thm-irr}
Admissible Gelfand-Tsetlin module $V_{\mathcal{C}}(T(L))$ is irreducible if and only if $\mathcal{C}$ is a maximal set of relations satisfied by $T(L)$.
\end{theorem}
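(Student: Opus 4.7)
The plan is to establish the two implications separately, using as main ingredients Theorem~\ref{action of gamma} (diagonalizability of $\Gamma_q$), Proposition~\ref{thm-mult} (multiplicity-one separation of the basis), the vanishing lemmata of type \ref{action of center1}, and the realizability Theorem~\ref{sufficiency of admissible}.

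For the $(\Rightarrow)$ direction I argue by contrapositive. Assume $\mathcal{C}$ is not maximal, so there is an admissible $\mathcal{C}'$ satisfied by $T(L)$ whose implications are not all captured by $\mathcal{C}$. I would construct an admissible enlargement $\widetilde{\mathcal{C}}$ of $\mathcal{C}$ (in the implication order) which is still satisfied by $T(L)$, essentially by adding suitable relations of $\mathcal{C}'$ to $\mathcal{C}$ while preserving the indecomposability and no-cross conditions of admissibility. By construction $\mathcal{B}_{\widetilde{\mathcal{C}}}(T(L)) \subsetneq \mathcal{B}_\mathcal{C}(T(L))$. The key step is to verify that $V_{\widetilde{\mathcal{C}}}(T(L))$ is actually a $U_q$-submodule of $V_\mathcal{C}(T(L))$: both modules carry the same Gelfand-Tsetlin action~(\ref{Gelfand-Tsetlin formulas}), and whenever $e_k$ or $f_k$ would push a $\widetilde{\mathcal{C}}$-realization to a tableau satisfying $\mathcal{C}$ but violating one of the relations of $\widetilde{\mathcal{C}}\setminus\mathcal{C}$, the violated inequality is tight at the source, producing a $[0]_q$ factor in the numerator -- by the same mechanism that underlies Lemma~\ref{action of center1}. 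Hence $V_{\widetilde{\mathcal{C}}}(T(L))$ is a proper nonzero submodule and $V_\mathcal{C}(T(L))$ is reducible.

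For the $(\Leftarrow)$ direction, assume $\mathcal{C}$ is maximal for $T(L)$ and let $W \subseteq V_\mathcal{C}(T(L))$ be a nonzero $U_q$-submodule. By Theorem~\ref{action of gamma} and Proposition~\ref{thm-mult}, $W$ is a direct sum of one-dimensional $\Gamma_q$-weight spaces, so it contains some basis tableau $T(R)$. I would first note that maximality of $\mathcal{C}$ for $T(L)$ forces $\mathcal{C}$ to be maximal for every $T(R) \in \mathcal{B}_\mathcal{C}(T(L))$: admissible relations in $\mathcal{R}$ depend only on the signs of those entry differences lying in $\mathbb{Z}+\frac{1(q)}{2}$, and these signs -- together with membership in $\mathbb{Z}+\frac{1(q)}{2}$ itself -- are preserved under the integer shifts $z$ defining the basis. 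Hence $\mathcal{B}_\mathcal{C}(T(R)) = \mathcal{B}_\mathcal{C}(T(L))$ and it suffices to show that the $U_q$-submodule generated by $T(R)$ exhausts $V_\mathcal{C}(T(L))$. I would prove this connectedness by induction on $\|R - R'\|_1$ for an arbitrary target $T(R') \in \mathcal{B}_\mathcal{C}(T(L))$, producing at each step a single $e_k$ or $f_k$ application whose Gelfand-Tsetlin coefficient is nonzero: vanishing of such a coefficient would require a bracket $[r_{k\pm 1, i} - r_{kj}]_q$ in the numerator to lie in $\frac{1(q)}{2}$, which under maximality is equivalent to the move leaving $\mathcal{B}_\mathcal{C}(T(L))$, a situation the induction avoids.

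The main obstacle is the inductive connectivity step in the $(\Leftarrow)$ direction. Its combinatorial heart is the same as in the non-quantum analog proved in \cite{FRZ}, but one must be careful to replace $\mathbb{Z}$ by $\mathbb{Z}+\frac{1(q)}{2}$ throughout the description of the vanishing loci of the Gelfand-Tsetlin coefficients. Since the present paper's notion of ``satisfying a relation'' already incorporates precisely this substitution, the argument of \cite{FRZ} transfers essentially verbatim and yields Theorem~\ref{thm-irr}.
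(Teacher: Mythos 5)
Your overall architecture coincides with the paper's: the backward direction uses $\Gamma_q$-diagonalizability (Theorem \ref{action of gamma}), the multiplicity-one property (Proposition \ref{thm-mult}), and a chain of single $e_k$/$f_k$ moves whose coefficients are nonzero precisely because maximality makes every vanishing bracket in \eqref{Gelfand-Tsetlin formulas} correspond to a move that leaves $\mathcal{B}_{\mathcal{C}}(T(L))$; the forward direction is by contrapositive, enlarging $\mathcal{C}$ to a strictly stronger admissible set satisfied by $T(L)$. The backward half of your argument is sound and is essentially the proof given in the paper.

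The gap is in your forward direction, where you claim that $V_{\widetilde{\mathcal{C}}}(T(L))$ is a \emph{submodule} of $V_{\mathcal{C}}(T(L))$ because any move violating a relation of $\widetilde{\mathcal{C}}\setminus\mathcal{C}$ ``produces a $[0]_q$ factor in the numerator.'' This mechanism works only for relations of type $\mathcal{R}^{\geq}$. For a strict relation $(k-1,j')>(k,j)$, the two moves that can break it from the tight position $r_{k-1,j'}-r_{kj}\in 1+\frac{1(q)}{2}$ are: $f_{k-1}$ applied at $(k-1,j')$, whose numerator $\prod_i[r_{k-2,i}-r_{k-1,j'}]_q$ involves only rows $k-2$ and $k-1$; and $e_k$ applied at $(k,j)$, whose numerator involves only rows $k+1$ and $k$. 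Neither contains a bracket pairing $(k-1,j')$ with $(k,j)$, so these coefficients are generically nonzero and the span of the $\widetilde{\mathcal{C}}$-realizations is \emph{not} closed under the action. The correct statement --- and the one the paper uses --- is that $V_{\mathcal{C}'}(T(L))$ is a \emph{subquotient}: added $\geq$-relations cut out submodules, while added $>$-relations cut out quotients (equivalently, the tableaux \emph{violating} such a $>$-relation span a submodule, since restoring $r_{k-1,j'}-r_{kj}$ from $0$ to $1$ requires $e_{k-1}$ or $f_k$, whose numerators do contain the vanishing bracket $[r_{kj}-r_{k-1,j'}]_q$). Either way one obtains a proper nonzero submodule and hence reducibility, so the theorem survives, but as written your closure argument fails for half of the relation types and needs to be replaced by this submodule/quotient dichotomy.
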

\begin{proof}
Let $T(R)$ be any tableau in $\mathcal{B}_{\mathcal{C}}(T(L))$ and $\mathcal{C}$  a maximal set of relations satisfied by $T(L)$. One can show easily that $U_q T(R)\subseteq V_{\mathcal{C}}(T(L))$.
If $T(R')$ is another tableau in $V_{\mathcal{C}}(T(L))$ then there exist $\{(i_s,j_s)\}\subseteq \{(i,j)\ |\ 1\leq j\leq i\leq n-1)\}$ and $t$ such that,
for any $k\leq t$, $T(R_k)=T(R+\sum_{s=1}^{k}p_s\delta^{i_s,j_s})\in \mathcal{B}_{\mathcal{C}}(T(L))$, where $p_i\in\{1, -1\}$ and
$T(R_0)=T(R)$, $T(R_t)=T(R')$. It is sufficient to show that we can obtain $T(R_s)$ from $T(R_{s-1})$.
If $p_i=1$ (resp. $p_i=-1$) then acting by $e_{i_s}$ (resp. $f_{i_s}$) on $T(R_{s-1})$ the coefficient of $T(R_s)$ in the image is not zero.
By Theorem \ref{action of gamma}, there exists an element in $\Gamma_q$ which annihilates all other tableaux except $T(R_s)$.
We conclude that $V_{\mathcal{C}}(T(L))\subseteq  U_qT(R)$.

Conversely, assume $T(L)$ satisfies $\mathcal{C}$ and $\mathcal{C}$ is not maximal.  Let $\mathcal{C}'$ be the maximal set of relations satisfied by $T(L)$. Then $V_{\mathcal{C}'}(T(L))$ is a subquotient of $V_{\mathcal{C}}(T(L))$ and $V_{\mathcal{C}'}(T(L))\neq V_{\mathcal{C}}(T(L))$. It contradicts the irreducibility of $V_{\mathcal{C}}(T(L))$.
\end{proof}

We conclude with an example of the  family of highest weight modules that can be realized as $V_{\mathcal{C}}(T(L))$ for some admissible set of relations $\mathcal{C}$.

\begin{proposition}\label{highest weight module}
Set $\lambda=(\lambda_1,\ldots ,\lambda_n)$. The irreducible highest weight module $L(\lambda)$ is admissible Gelfand-Tsetlin module if  $\lambda_{i}-\lambda_j\notin \mathbb{Z}$ or $\lambda_{i}-\lambda_j> i-j$ for any $1\leq i<j\leq n-1$.
\end{proposition}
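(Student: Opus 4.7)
The plan is to realize $L(\lambda)$ as $V_{\mathcal{C}}(T(L^0))$ for an appropriate maximal admissible $\mathcal{C}$ and a ``highest weight tableau'' $T(L^0)$ defined by $l^0_{kj}:=\lambda_j-j$. A direct check with the Gelfand-Tsetlin formulas \eqref{Gelfand-Tsetlin formulas} shows that $T(L^0)$ has weight $\lambda$ and satisfies $e_k\,T(L^0)=0$ for every $k$, because each summand of $e_k T(L^0)$ contains the factor $[l^0_{k+1,j}-l^0_{k,j}]_q=[0]_q=0$ in its numerator. Hence $T(L^0)$ is a highest weight vector of weight $\lambda$ in any module on which the Gelfand-Tsetlin formulas define an action.

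To construct $\mathcal{C}$, I partition $\{1,\ldots,n\}$ into the equivalence classes $I_1,\ldots,I_s$ under $i\sim j$ iff $\lambda_i-\lambda_j\in\mathbb{Z}$, writing each class as $I_\alpha=\{j^\alpha_1<\cdots<j^\alpha_{m_\alpha}\}$. The hypothesis ``$\lambda_i-\lambda_j>i-j$ whenever $i\sim j$ and $i<j$'' rewrites as $l^0_{n,i}>l^0_{n,j}$, so within any class the entries of $T(L^0)$ in a fixed row form a strictly decreasing sequence of positive integer differences. For each $I_\alpha$, let $\mathcal{C}_\alpha$ be the $I_\alpha$-analogue of the standard set $\mathcal{S}$: it contains the vertical relations $(k+1,j^\alpha_r)\geq (k,j^\alpha_r)$, the diagonal relations $(k,j^\alpha_r)>(k+1,j^\alpha_{r+1})$ (for all valid $k$ and $r$), and the top-row relations $(n,j^\alpha_r)\geq(n,j^\alpha_{r'})$ for $r<r'$ from $\mathcal{R}^0$. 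All these are satisfied by $T(L^0)$, and since distinct classes use disjoint columns, the $\mathcal{C}_\alpha$ are pairwise disconnected; set $\mathcal{C}:=\bigcup_\alpha\mathcal{C}_\alpha$.

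The admissibility of each $\mathcal{C}_\alpha$ reduces to that of $\mathcal{S}$ under the relabeling $j\mapsto j^\alpha_r$: every $\succ_\mathcal{C}$-chain strictly advances the class index $r$, so only column orderings $j^\alpha_r<j^\alpha_{r'}$ arise, giving conditions (i) and (ii); the only available relations force contradictory class-index inequalities in any attempted cross, giving (iii); and condition (iv), for a pair $(k,j^\alpha_r),(k,j^\alpha_{r'})$ with $r<r'$, is witnessed by $s=j^\alpha_{r+1}$ and $t=j^\alpha_{r'}$ via option (b), exactly as in the $\mathcal{S}$ verification of \cite{FRZ}. Theorem \ref{sufficiency of admissible} then gives a $U_q$-module structure on $V_\mathcal{C}(T(L^0))$.

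Finally, $\mathcal{C}$ is maximal for $T(L^0)$: any further intra-class relation would produce a cross or violate the $\succeq$-ordering, and a relation joining two different classes is impossible since $l^0_{k,i}-l^0_{k',j}\notin\mathbb{Z}$ whenever $i\not\sim j$. Theorem \ref{thm-irr} then gives irreducibility of $V_\mathcal{C}(T(L^0))$, and since it contains the highest weight vector $T(L^0)$ of weight $\lambda$, it must coincide with $L(\lambda)$. The main obstacle is the admissibility verification for $\mathcal{C}_\alpha$ with ``gapped'' classes ($j^\alpha_{r+1}>j^\alpha_r+1$): the arguments of \cite{FRZ} for $\mathcal{S}$ must be transferred to the sub-sequence $\{j^\alpha_r\}_r$, and edge cases in condition (iv) need care because the witnesses cannot be chosen from consecutive columns outside the class.
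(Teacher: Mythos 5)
Your proposal is correct and follows essentially the same route as the paper's (much terser) proof: take the tableau with $l_{kj}=\lambda_j-j$, note it is a highest weight vector of weight $\lambda$, let $\mathcal{C}$ be the maximal set of relations it satisfies, and invoke Theorems \ref{sufficiency of admissible} and \ref{thm-irr}. The only difference is that you make explicit what the paper merely asserts, namely that this maximal $\mathcal{C}$ is the disconnected union of copies of $\mathcal{S}$ supported on the $\mathbb{Z}$-equivalence classes of the $\lambda_i$ and that it is admissible; the minor frictions you flag (the $s<t$ edge case in condition (iv), and $\mathbb{Z}$ versus $\mathbb{Z}+\tfrac{1(q)}{2}$ in the hypothesis) are imprecisions inherited from the paper itself, not gaps in your argument.
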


\begin{proof}
Let $T(L)$ be a tableau such that $l_{ij}=\lambda_i$, $\mathcal{C}$ be the maximal set of relations satisfied by $T(L)$. Then
$\mathcal{C}$ is admissible and $V_{\mathcal{C}}(T(L))$ is irreducible admissible module.
Moreover,  $T(L)$ is a  highest weight vector and
 $V_{\mathcal{C}}(T(L))$ is isomorphic to $L(\lambda)$.
\end{proof}


\end{document}